\setlist[enumerate]{label={\arabic*.}}
\newcommand{\Hstar}{{\cal H}^{*}}
\newcommand{\calH}{{\cal H}}
\newtheorem{theorem}{Theorem}[section]
\newtheorem{lemma}[theorem]{Lemma}
\theoremstyle{definition}
\newtheorem{problem}{Open Problem}
\newcommand{\squishlist}{
 \begin{list}{$\bullet$}
  { \setlength{\itemsep}{0pt}
     \setlength{\parsep}{3pt}
     \setlength{\topsep}{3pt}
     \setlength{\partopsep}{0pt}
     \setlength{\leftmargin}{2.5em}
     \setlength{\labelwidth}{1em}
     \setlength{\labelsep}{0.5em} } }
\newcommand{\squishlisttwo}{
 \begin{list}{$\triangleright$}
  { \setlength{\itemsep}{0pt}
     \setlength{\parsep}{0pt}
    \setlength{\topsep}{0pt}
    \setlength{\partopsep}{0pt}
    \setlength{\leftmargin}{2em}
    \setlength{\labelwidth}{1.5em}
    \setlength{\labelsep}{0.5em} } }
\newcommand{\squishend}{
  \end{list}  }
\begin{document}

\title{A refinement on the structure of vertex-critical ($P_5$, gem)-free graphs}
\author{
Ben Cameron\\ 
\small Department of Computing Science\\
\small The King's University\\
\small Edmonton, AB Canada\\
\small ben.cameron@kingsu.ca\\
\and
Ch\'{i}nh T. Ho\`{a}ng\\ 
\small Department of Physics and Computer Science\\
\small Wilfrid Laurier University\\
\small Waterloo, ON Canada\\
\small choang@wlu.ca\\
}

\date{\today}

\maketitle

\begin{abstract}
We give a new, stronger proof that there are only finitely many  $k$-vertex-critical ($P_5$,~gem)-free graphs for all $k$. Our proof further refines the structure of these graphs and allows for the implementation of a simple exhaustive computer search to completely list all $6$- and $7$-vertex-critical $(P_5$, gem)-free graphs. Our results imply the existence of polynomial-time certifying algorithms to decide the $k$-colourability of $(P_5$, gem)-free graphs for all $k$ where the certificate is either a $k$-colouring or a $(k+1)$-vertex-critical induced subgraph. Our complete lists for $k\le 7$ allow for the implementation of these algorithms for all $k\le 6$.
\end{abstract}

\section{Introduction}
For a fixed $k\ge 3$, it has long been known that determining the $k$-colourability of a general graph is an NP-complete problem~\cite{Karp1972}. When restricted to families of graphs, however, polynomial-time algorithms can be developed. For example, $k$-colourability can be decided in polynomial-time for all $k$ for perfect graphs~\cite{Grotschel1984} and $P_5$-free graphs~\cite{Hoang2010}. The latter is of particular interest because deciding $k$-colourability of $H$-free graphs remains NP-complete if $H$ contains an induced cycle~\cite{KaminskiLozin2007,Maffray1996} or claw~\cite{Holyer1981,LevenGail1983}. Further, the problem remains NP-complete for $P_6$-free graphs for all $k\ge 5$ and for $P_7$-free graphs for all $k\ge 4$~\cite{Huang2016}. Completing the complexity dichotomy for $4$-colourability of $H$-free graphs when $H$ is connected, is the polynomial-time algorithm to decide $4$-colourability of $P_6$-free graphs~\cite{P6freeconf,P6free1,P6free2}. Thus, $P_5$ is the largest connected graph whose forbidding results in the existence (assuming P$\neq$NP) of polynomial-time $k$-colourability algorithms for all $k$. While many open problems remain on the complexity of $k$-colouring $H$-free graphs, including $3$-colouring $P_8$-free graph~\cite{Bonomo2018} and $6$-colouring $(P_3+P_2)$-free graphs~\cite{HajebiLiSpirkl2021, Golovach2017}, the focus of this paper is to further the study of polynomial-time \textit{certifying} $k$-colourability algorithms. An algorithm is certifying if it returns an easily verifiable witness with each output to certify its correctness. A certifying algorithm offers the considerable advantage of an extra layer of certainty for users that the algorithm was correctly implemented. This has led to an increase in the development in certifying algorithms in the last decade and even to calls from some that only certifying algorithms should be developed for solving complex tasks~\cite{McConnell2011}. For a $k$-colourability algorithm to be certifying, it must return a $k$-colouring if one exists and a $(k+1)$-vertex-critical induced subgraph if the graph is not $k$-colourable. In a family of graphs, if there are only finitely many $(k+1)$-vertex critical graphs, then a polynomial-time $k$-colourability algorithm that certifies negative answers can be developed by determining if any of the vertex-critical graphs are induced subgraphs of the input graph (see~\cite{P5banner2019} for more details).

The classification of all 12 $4$-vertex-critical $P_5$-free graphs was used to develop a linear-time certifying $3$-colourability algorithm for $P_5$-free graphs~\cite{Bruce2009,MaffrayMorel2012}. For all $k\ge 4$, however, there are infinitely $(k+1)$-vertex-critical $P_5$-free graphs~\cite{Hoang2015}. This has led to tremendous interest in determining for which subfamilies of $P_5$-free graphs the polynomial-time $k$-colourability algorithms can be extended to certifying ones. Perhaps the most comprehensive known result is the dichotomy for $H$ of order $4$ that in all cases except $H=2K_2$ or $H=K_3+P_1$, there are only a finite number of $k$-vertex critical $(P_5,H)$-free graphs for all $k$~\cite{KCameron2021}. The open problem of completing such a dichotomy for $H$ of order $5$ was also posed in~\cite{KCameron2021}. Toward resolving this open problem, it is known that there are only finitely many $k$-vertex critical $(P_5,H$)-free for $k\le 5$ when $H=C_5$~\cite{Hoang2015} and for all $k$ when:

\begin{itemize}
\item $H$=banner~\cite[Theorem 3(i)]{Brause2022}
\item $H=K_{2,3}$ and $H=K_{1,4}$~\cite{Kaminski2019}
\item $H=P_2+3P_1$~\cite{CameronHoangSawada2022}
\item $H=P_3+2P_1$~\cite{AbuadasCameronHoangSawada2022}
\item $H=\overline{P_5}$~\cite{Dhaliwal2017}
\item $H=\overline{P_3+P_2}$ and $H=$gem~\cite{CaiGoedgebeurHuang2021}
\end{itemize}

Of particular interest among these results are the last two bullet points, as complete lists of all such $k$-vertex-critical graphs were given for $k=4$ and $k=5$~\cite{Dhaliwal2017,CaiGoedgebeurHuang2021}. These complete lists are essential for implementing the corresponding certifying algorithms. In this paper, we present a new proof that there are only finitely $k$-vertex-critical $(P_5,\text{ gem})$-free graphs. The proof in~\cite{CaiGoedgebeurHuang2021} involves bounding the order of every $k$-vertex-critical graph in one difficult structural case (this case is that the graph belongs to $\calH$, which will be defined later). Our new proof shows the stronger result that there are in fact no vertex-critical graphs in this difficult structural case. Our stronger result allows us to use a simple computer search to completely characterize all such graphs for $k=6$ and $k=7$, allowing for the implementation of polynomial-time certifying $5$- and $6$-colourability algorithms. Beyond their use for implementing certifying colourability algorithms, vertex-critical graphs are of interest in many other areas. In particular, vertex-critical $(P_5,\text{ gem})$-free graphs were very recently used to prove the Borodin-Kostochka Conjecture for $(P_5,\text{ gem})$-free graphs~\cite{CranstonLafayetteRabern2022}. Our proof that there are only finitely many $k$-vertex-critical $(P_5,\text{ gem})$-free graphs for all $k$ would allow for a simpler proof of the Main Theorem in~\cite{CranstonLafayetteRabern2022}, in particular, Case 11, the most involved case, is no longer required.

The rest of the paper is organized as follows: we first introduce some notation and definitions before proving our result on $k$-vertex-critical $(P_5,\text{ gem})$-free graphs in Section~\ref{sec:critP5gem}. In Section~\ref{sec:6and7} we give complete lists of all 19 graphs for $k=6$ and all 46 graphs for $k=7$ and outline our exhaustive computer search (the graphs in graph6 format and our code is available at \cite{graphfiles}). We then conclude by posing some open problems for future research in Section~\ref{sec:conclusion}.

\subsection{Definitions and Notation}

Let $\chi(G)$ denote the \textit{chromatic number} of $G$. A graph $G$ is \textit{$k$-vertex-critical} if $\chi(G)=k$ and $\chi(G-v)<k$ for all $v\in V(G)$. For a given colouring $c$ of a graph $G$ and subset $S\subseteq V(G)$, let $c(S)$ denote the set of all colours used on vertices in $S$. For $S\subseteq V(G)$, let $G[S]$ denote the subgraph of $G$ induced by $S$. For vertices $u,v\in V(G)$, we write $u\sim v$ if $u$ and $v$ are adjacent and $u\nsim v$ is $u$ and $v$ are non-adjacent. A for $S,T\subseteq V(G)$, we say $S$ is \textit{complete} (\textit{anti-complete}) to $T$ if $s\sim t$ ($s\nsim t$) for all $s\in S$ and $t\in T$. A \textit{module} of a graph $G$ is a set $S$ such that for all $v\in V(G)\setminus S$, $v$ is either complete or anti-complete to $S$. For $S\subseteq V(G)$, we say $S$ is a \textit{stable set} if $u\nsim v$ for all $u,v\in S$. A stable set is called \textit{good} if it meets every maximum clique of $G$. Given a graph $G$, a clique ($P_4$-free) expansion of $G$ is a graph obtained from $G$ by replacing some or all of its vertices by cliques ($P_4$-free graphs) and joining every vertex in these cliques ($P_4$-free graphs) according to the adjacencies of $G$.

\section{Vertex-critical ($P_5$, gem)-free graphs}\label{sec:critP5gem}
Throughout this section, we will make extensive use of a structural characterization give in~\cite{Chudetal2020} which requires some further definitions. Let $G_1,G_2,\ldots,G_{10}$ be defined as shown in Figure~\ref{fig:G1G7} and let $\mathcal{G}_i$ and $\mathcal{G}^{\ast}_i$ denote the set of all $P_4$-free and clique expansions of $G_i$, respectively. Throughout, for a graph $G\in \mathcal{G}_i$, we will use $ {Q_j}$ to denote the set of vertices used to replace $v_j$ in the graph $G_i$ according to the labellings in Figure~\ref{fig:G1G7}.
Let $\calH$ be the class of ($P_5$, gem)-free graphs $G$ such that $V(G)$ can be partitioned into seven non-empty sets, $A_1,\ldots,A_7$ such that:
\begin{itemize}
\item $A_1$ is complete to $A_2\cup A_5$ and anti-complete to $A_3\cup A_4\cup A_6\cup A_7$,
\item $A_2$ is complete to $A_1\cup A_3$ and anti-complete to $A_4\cup A_5\cup A_6\cup A_7$,
\item $A_3$ is complete to $A_2\cup A_4\cup A_6$ and anti-complete to $A_1\cup A_5\cup A_7$, 
\item $A_4$ is complete to $A_3\cup A_5\cup A_6$ and anti-complete to $A_1\cup A_2\cup A_7$,
\item $A_5$ is complete to $A_1\cup A_4$ and anti-complete to $A_2\cup A_3\cup A_6\cup A_7$,
\item every component of $G[A_7]$ is a module, and
\item every vertex in $A_7$ has a neighbour in $A_6$.
\end{itemize}

\noindent We refer the reader to Figure~\ref{fig:H} for the general form of a graph in $\calH$. Let $\Hstar\subseteq \calH$ be defined by all graphs in $\calH$ such that $A_i$ is a clique for all $i\in \{1,2,3,4,5\}$ and each component of $A_7$ is a clique. We are now ready for the structural characterization.
\setcounter{subfigure}{0}
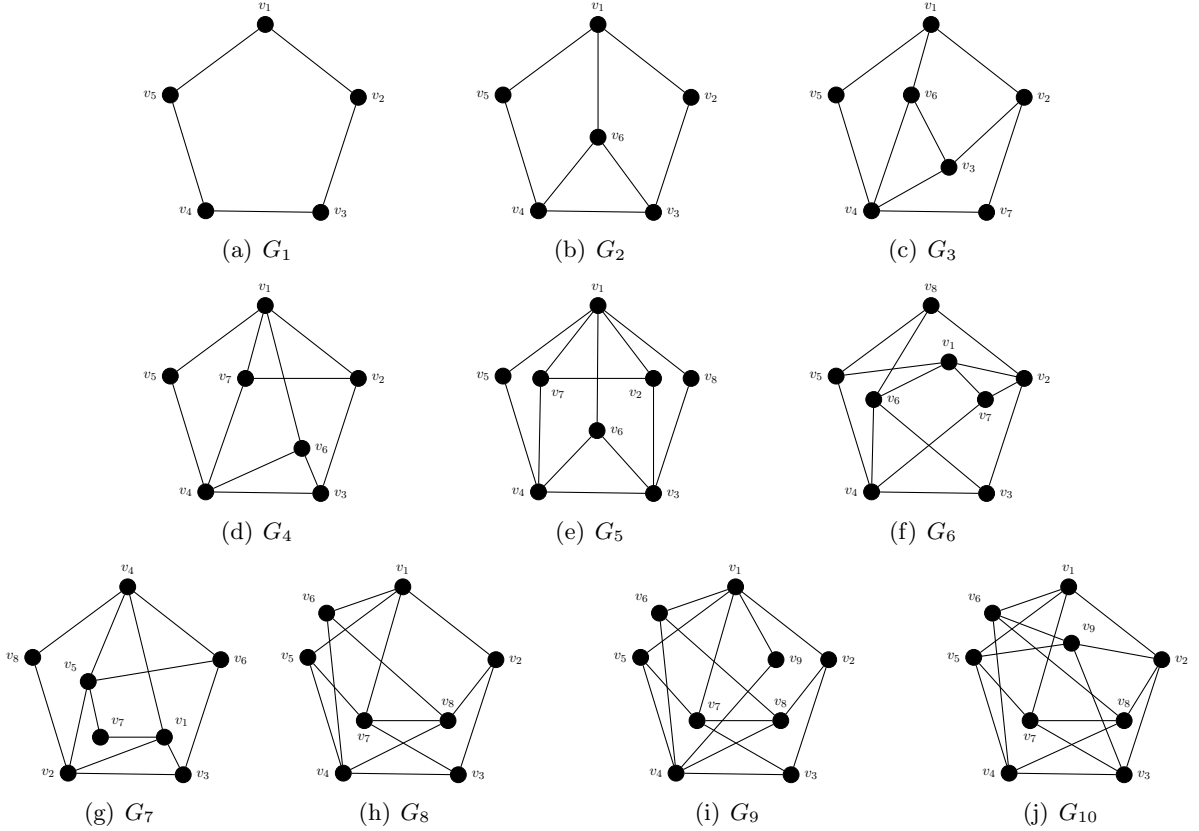
\begin{figure}[htb]
\def\c{0.5}
\def\r{1}
\centering
\subfigure[$G_1$]{
\scalebox{\c}{
\begin{tikzpicture}
\GraphInit[vstyle=Classic]
\Vertex[Lpos=180,L=\hbox{$v_5$},x=0.0cm,y=3.1235cm]{v0}
\Vertex[L=\hbox{$v_2$},x=5.0cm,y=3.0618cm]{v1}
\Vertex[Lpos=180,L=\hbox{$v_4$},x=0.942cm,y=0.0422cm]{v2}
\Vertex[Lpos=90,L=\hbox{$v_1$},x=2.5253cm,y=5.0cm]{v3}
\Vertex[L=\hbox{$v_3$},x=4.0006cm,y=0.0cm]{v4}

\Edge[](v0)(v2)
\Edge[](v0)(v3)
\Edge[](v1)(v3)
\Edge[](v1)(v4)
\Edge[](v2)(v4)
\end{tikzpicture}}}
\qquad
\subfigure[$G_2$]{
\scalebox{\c}{
\begin{tikzpicture}
\GraphInit[vstyle=Classic]
\Vertex[Lpos=180,L=\hbox{$v_5$},x=0.0cm,y=3.1235cm]{v4}
\Vertex[L=\hbox{$v_2$},x=5.0cm,y=3.0618cm]{v1}
\Vertex[Lpos=180,L=\hbox{$v_4$},x=0.942cm,y=0.0422cm]{v3}
\Vertex[Lpos=90,L=\hbox{$v_1$},x=2.5253cm,y=5.0cm]{v0}
\Vertex[L=\hbox{$v_3$},x=4.0006cm,y=0.0cm]{v2}
\Vertex[L=\hbox{$v_6$},x=2.5253cm,y=2cm]{v5}
\Edge[](v0)(v1)
\Edge[](v0)(v4)
\Edge[](v0)(v5)
\Edge[](v1)(v2)
\Edge[](v2)(v3)
\Edge[](v2)(v5)
\Edge[](v3)(v4)
\Edge[](v3)(v5)
\end{tikzpicture}}}
\qquad
\subfigure[$G_3$]{
\scalebox{\c}{
\begin{tikzpicture}
\GraphInit[vstyle=Classic]
\Vertex[Lpos=180,L=\hbox{$v_5$},x=0.0cm,y=3.1235cm]{v4}
\Vertex[L=\hbox{$v_2$},x=5.0cm,y=3.0618cm]{v1}
\Vertex[Lpos=180,L=\hbox{$v_4$},x=0.942cm,y=0.0422cm]{v3}
\Vertex[Lpos=90,L=\hbox{$v_1$},x=2.5253cm,y=5.0cm]{v0}
\Vertex[L=\hbox{$v_3$},x=3cm,y=1.2cm]{v2}
\Vertex[L=\hbox{$v_6$},x=2cm,y=3.1235cm]{v5}
\Vertex[L=\hbox{$v_7$},x=4.0006cm,y=0.0cm]{v6}
\Edge[](v0)(v1)
\Edge[](v0)(v4)
\Edge[](v0)(v5)
\Edge[](v1)(v2)
\Edge[](v1)(v6)
\Edge[](v2)(v3)
\Edge[](v2)(v5)
\Edge[](v3)(v4)
\Edge[](v3)(v5)
\Edge[](v3)(v6)
\end{tikzpicture}}}
\qquad
\subfigure[$G_4$]{
\scalebox{\c}{
\begin{tikzpicture}
\GraphInit[vstyle=Classic]
\Vertex[Lpos=180,L=\hbox{$v_5$},x=0.0cm,y=3.1235cm]{v4}
\Vertex[L=\hbox{$v_2$},x=5.0cm,y=3.0618cm]{v1}
\Vertex[Lpos=180,L=\hbox{$v_4$},x=0.942cm,y=0.0422cm]{v3}
\Vertex[Lpos=90,L=\hbox{$v_1$},x=2.5253cm,y=5.0cm]{v0}
\Vertex[L=\hbox{$v_3$},x=4.0006cm,y=0.0cm]{v2}
\Vertex[L=\hbox{$v_6$},x=3.5cm,y=1.2cm]{v5}
\Vertex[Lpos=180,L=\hbox{$v_7$},x=2cm,y=3.0618cm]{v6}
\Edge[](v0)(v1)
\Edge[](v0)(v4)
\Edge[](v0)(v5)
\Edge[](v0)(v6)
\Edge[](v1)(v2)
\Edge[](v1)(v6)
\Edge[](v2)(v3)
\Edge[](v2)(v5)
\Edge[](v3)(v4)
\Edge[](v3)(v5)
\Edge[](v3)(v6)
\end{tikzpicture}}}
\qquad
\subfigure[$G_5$]{
\scalebox{\c}{
\begin{tikzpicture}
\GraphInit[vstyle=Classic]
\Vertex[Lpos=180,L=\hbox{$v_5$},x=0.0cm,y=3.1235cm]{v4}
\Vertex[Lpos=-135,L=\hbox{$v_2$},x=4cm,y=3.0618cm]{v1}
\Vertex[Lpos=180,L=\hbox{$v_4$},x=0.942cm,y=0.0422cm]{v3}
\Vertex[Lpos=90,L=\hbox{$v_1$},x=2.5253cm,y=5.0cm]{v0}
\Vertex[L=\hbox{$v_3$},x=4.0006cm,y=0.0cm]{v2}
\Vertex[L=\hbox{$v_8$},x=5.0cm,y=3.0618cm]{v7}
\Vertex[L=\hbox{$v_6$},x=2.5cm,y=1.6729cm]{v5}
\Vertex[Lpos=-45,L=\hbox{$v_7$},x=1cm,y=3.0618cm]{v6}
\Edge[](v0)(v1)
\Edge[](v0)(v4)
\Edge[](v0)(v5)
\Edge[](v0)(v6)
\Edge[](v0)(v7)
\Edge[](v1)(v2)
\Edge[](v1)(v6)
\Edge[](v2)(v3)
\Edge[](v2)(v5)
\Edge[](v2)(v7)
\Edge[](v3)(v4)
\Edge[](v3)(v5)
\Edge[](v3)(v6)
\end{tikzpicture}}}
\qquad
\subfigure[$G_6$]{
\scalebox{\c}{
\begin{tikzpicture}
\GraphInit[vstyle=Classic]
\Vertex[Lpos=180,L=\hbox{$v_5$},x=0.0cm,y=3.1235cm]{v4}
\Vertex[L=\hbox{$v_2$},x=5.0cm,y=3.0618cm]{v1}
\Vertex[Lpos=180,L=\hbox{$v_4$},x=0.942cm,y=0.0422cm]{v3}
\Vertex[Lpos=90,L=\hbox{$v_1$},x=3cm,y=3.5cm]{v0}
\Vertex[L=\hbox{$v_3$},x=4.0006cm,y=0.0cm]{v2}
\Vertex[Lpos=90,L=\hbox{$v_8$},x=2.5253cm,y=5.0cm]{v7}
\Vertex[L=\hbox{$v_6$},x=1cm,y=2.5cm]{v5}
\Vertex[Lpos=-90,L=\hbox{$v_7$},x=3.9687cm,y=2.5cm]{v6}
\Edge[](v0)(v1)
\Edge[](v0)(v4)
\Edge[](v0)(v5)
\Edge[](v0)(v6)
\Edge[](v1)(v2)
\Edge[](v1)(v6)
\Edge[](v1)(v7)
\Edge[](v2)(v3)
\Edge[](v2)(v5)
\Edge[](v3)(v4)
\Edge[](v3)(v5)
\Edge[](v3)(v6)
\Edge[](v4)(v7)
\Edge[](v5)(v7)
\end{tikzpicture}}}
\qquad
\subfigure[$G_7$]{
\scalebox{\c}{
\begin{tikzpicture}
\GraphInit[vstyle=Classic]
\Vertex[Lpos=45,L=\hbox{$v_1$},x=3.5cm,y=1cm]{v0}
\Vertex[Lpos=180,L=\hbox{$v_2$},x=0.942cm,y=0.0422cm]{v1}
\Vertex[L=\hbox{$v_3$},x=4.0006cm,y=0.0cm]{v2}
\Vertex[Lpos=90,L=\hbox{$v_4$},x=2.5253cm,y=5.0cm]{v3}
\Vertex[Lpos=135,L=\hbox{$v_5$},x=1.4765cm,y=2.4803cm]{v4}
\Vertex[L=\hbox{$v_6$},x=5.0cm,y=3.0618cm]{v5}
\Vertex[Lpos=45,L=\hbox{$v_7$},x=1.8cm,y=1cm]{v6}
\Vertex[Lpos=180,L=\hbox{$v_8$},x=0.0cm,y=3.1235cm]{v7}
\Edge[](v0)(v1)
\Edge[](v0)(v2)
\Edge[](v0)(v3)
\Edge[](v0)(v6)
\Edge[](v1)(v2)
\Edge[](v1)(v4)
\Edge[](v1)(v7)
\Edge[](v2)(v5)
\Edge[](v3)(v4)
\Edge[](v3)(v5)
\Edge[](v3)(v7)
\Edge[](v4)(v5)
\Edge[](v4)(v6)
\end{tikzpicture}}}
\subfigure[$G_8$]{
\scalebox{\c}{
\begin{tikzpicture}
\GraphInit[vstyle=Classic]
\Vertex[Lpos=180,L=\hbox{$v_5$},x=0.0cm,y=3.1235cm]{v4}
\Vertex[L=\hbox{$v_2$},x=5.0cm,y=3.0618cm]{v1}
\Vertex[Lpos=180,L=\hbox{$v_4$},x=0.942cm,y=0.0422cm]{v3}
\Vertex[Lpos=90,L=\hbox{$v_1$},x=2.5253cm,y=5.0cm]{v0}
\Vertex[L=\hbox{$v_3$},x=4.0006cm,y=0.0cm]{v2}

\Vertex[Lpos=135,L=\hbox{$v_6$},x=0.5cm,y=4.3cm]{v5}
\Vertex[Lpos=-90,L=\hbox{$v_7$},x=1.5cm,y=1.4398cm]{v6}
\Vertex[Lpos=90,L=\hbox{$v_8$},x=3.7265cm,y=1.4398cm]{v7}
\Edge[](v0)(v1)
\Edge[](v0)(v4)
\Edge[](v0)(v5)
\Edge[](v0)(v6)
\Edge[](v1)(v2)
\Edge[](v1)(v7)
\Edge[](v2)(v3)
\Edge[](v2)(v6)
\Edge[](v3)(v4)
\Edge[](v3)(v5)
\Edge[](v3)(v7)
\Edge[](v4)(v6)
\Edge[](v5)(v7)
\Edge[](v6)(v7)
\end{tikzpicture}}}
\qquad
\subfigure[$G_9$]{
\scalebox{\c}{
\begin{tikzpicture}
\GraphInit[vstyle=Classic]
\Vertex[Lpos=180,L=\hbox{$v_5$},x=0.0cm,y=3.1235cm]{v4}
\Vertex[L=\hbox{$v_2$},x=5.0cm,y=3.0618cm]{v1}
\Vertex[Lpos=180,L=\hbox{$v_4$},x=0.942cm,y=0.0422cm]{v3}
\Vertex[Lpos=90,L=\hbox{$v_1$},x=2.5253cm,y=5.0cm]{v0}
\Vertex[L=\hbox{$v_3$},x=4.0006cm,y=0.0cm]{v2}

\Vertex[Lpos=135,L=\hbox{$v_6$},x=0.5cm,y=4.3cm]{v5}
\Vertex[Lpos=45,L=\hbox{$v_7$},x=1.5cm,y=1.4398cm]{v6}
\Vertex[Lpos=90,L=\hbox{$v_8$},x=3.7265cm,y=1.4398cm]{v7}
\Vertex[L=\hbox{$v_9$},x=3.6cm,y=3.0618cm]{v8}
\Edge[](v0)(v1)
\Edge[](v0)(v4)
\Edge[](v0)(v5)
\Edge[](v0)(v6)
\Edge[](v0)(v8)
\Edge[](v1)(v2)
\Edge[](v1)(v7)
\Edge[](v2)(v3)
\Edge[](v2)(v6)
\Edge[](v3)(v4)
\Edge[](v3)(v5)
\Edge[](v3)(v7)
\Edge[](v3)(v8)
\Edge[](v4)(v6)
\Edge[](v5)(v7)
\Edge[](v6)(v7)
\end{tikzpicture}}}
\qquad
\subfigure[$G_{10}$]{
\scalebox{\c}{
\begin{tikzpicture}
\GraphInit[vstyle=Classic]
\Vertex[Lpos=180,L=\hbox{$v_5$},x=0.0cm,y=3.1235cm]{v4}
\Vertex[L=\hbox{$v_2$},x=5.0cm,y=3.0618cm]{v1}
\Vertex[Lpos=180,L=\hbox{$v_4$},x=0.942cm,y=0.0422cm]{v3}
\Vertex[Lpos=90,L=\hbox{$v_1$},x=2.5253cm,y=5.0cm]{v0}
\Vertex[L=\hbox{$v_3$},x=4.0006cm,y=0.0cm]{v2}

\Vertex[Lpos=135,L=\hbox{$v_6$},x=0.5cm,y=4.3cm]{v5}
\Vertex[Lpos=-90,L=\hbox{$v_7$},x=1.5cm,y=1.4398cm]{v6}
\Vertex[Lpos=90,L=\hbox{$v_8$},x=4cm,y=1.4398cm]{v7}
\Vertex[Lpos=45,L=\hbox{$v_9$},x=2.6cm,y=3.5cm]{v8}
\Edge[](v0)(v1)
\Edge[](v0)(v4)
\Edge[](v0)(v5)
\Edge[](v0)(v6)
\Edge[](v1)(v2)
\Edge[](v1)(v7)
\Edge[](v1)(v8)
\Edge[](v2)(v3)
\Edge[](v2)(v6)
\Edge[](v2)(v8)
\Edge[](v3)(v4)
\Edge[](v3)(v5)
\Edge[](v3)(v7)
\Edge[](v4)(v6)
\Edge[](v4)(v8)
\Edge[](v5)(v7)
\Edge[](v5)(v8)
\Edge[](v6)(v7)
\end{tikzpicture}}}
\caption{Special $(P_5,\text{ gem})$-free graphs used in the structural characterization in \cite{Chudetal2020}.}\label{fig:G1G7}
\end{figure}

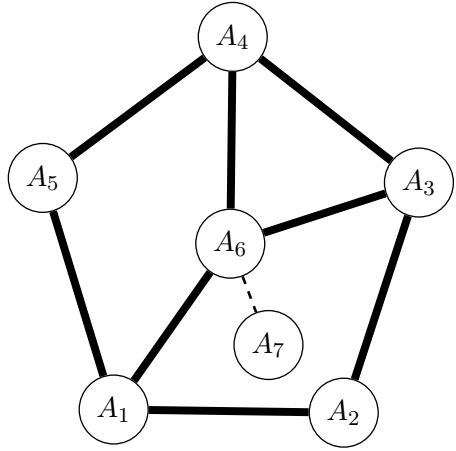
\begin{figure}[htb]
\centering
\begin{tikzpicture}
\GraphInit[vstyle=Normal]
\Vertex[L=\hbox{$A_5$},x=0.0cm,y=3.1235cm]{A1}
\Vertex[L=\hbox{$A_3$},x=5.0cm,y=3.0618cm]{A3}
\Vertex[L=\hbox{$A_1$},x=0.942cm,y=0.0422cm]{A5}
\Vertex[L=\hbox{$A_4$},x=2.5253cm,y=5.0cm]{A2}
\Vertex[L=\hbox{$A_2$},x=4.0006cm,y=0.0cm]{A4}
\Vertex[L=\hbox{$A_6$},x=2.4923cm,y=2.2487cm]{A6}
\Vertex[L=\hbox{$A_7$},x=3cm,y=.9cm]{A7}
\SetUpEdge[lw=3pt]

\Edge[](A1)(A2)
\Edge[](A1)(A5)
\Edge[](A2)(A3)
\Edge[](A2)(A6)
\Edge[](A3)(A4)
\Edge[](A3)(A6)
\Edge[](A4)(A5)
\Edge[](A6)(A5)
\SetUpEdge[lw=1pt]
\Edge[style=dashed](A6)(A7)
%
\end{tikzpicture}
\caption{General form of a graph in $\calH$ where the thick black lines denote sets which are complete to each other and the thin dashed line denotes any adjacency satisfying the requirements in~\cite{Chudetal2020}.}\label{fig:H}
\end{figure}

\begin{theorem}[\cite{Chudetal2020}]\label{thm:P5gemfreestructure}
If $G$ is a $(P_5,\text{ gem})$-free graph, then $G$ is either perfect, in $\mathcal{G}_i$ for some $i\in\{1,\ldots, 10\}$, or $G\in\calH$.
\end{theorem}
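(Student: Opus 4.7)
The plan is to treat the perfect and non-perfect cases separately. If $G$ is perfect we are in the first alternative, so suppose $G$ is not perfect. By the Strong Perfect Graph Theorem, $G$ contains an induced odd hole or odd antihole. Since $G$ is $P_5$-free, the only odd hole possible is $C_5$ (every longer odd cycle contains an induced $P_5$). Every odd antihole $\overline{C_{2k+1}}$ with $k\geq 3$ contains a gem: one locates an induced $P_4$ and verifies that a fifth vertex is adjacent to all four of its vertices. Since $\overline{C_5}=C_5$, I conclude that $G$ contains an induced copy $C=v_1v_2v_3v_4v_5$ of $C_5$, which I fix for the rest of the argument.

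Next I would classify each vertex $u\in V(G)\setminus V(C)$ by $N(u)\cap V(C)$. Because four consecutive vertices of $C$ induce a $P_4$, any vertex adjacent to all four of them produces a gem; this immediately rules out neighbourhoods of size $4$ or $5$ on $C$ and limits the admissible types to the empty neighbourhood, a single vertex, two adjacent vertices, two non-adjacent vertices, three consecutive vertices, or three vertices forming $P_2+P_1$. Further applications of $P_5$- and gem-freeness eliminate some of these types depending on which others are non-empty. For each $i\in\{1,\ldots,5\}$, I then define $A_i$ to consist of $v_i$ together with all vertices whose neighbourhood on $C$ is exactly $\{v_{i-1},v_{i+1}\}$ (indices mod $5$), and collect the remaining admissible vertices (of \emph{three-consecutive} and \emph{two-adjacent} types) into auxiliary sets that will either witness one of the special graphs $G_2,\ldots,G_{10}$ or form the $A_6,A_7$ of a graph in $\calH$.

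The heart of the proof is deriving the internal and inter-class structure from the forbidden induced subgraphs. Within each $A_i$, gem-freeness applied to a putative $P_4$ in $A_i$ together with $v_{i-1}$ or $v_{i+1}$ forces $G[A_i]$ to be $P_4$-free; consecutive classes $A_i$, $A_{i+1}$ are forced to be complete to each other and non-consecutive classes anti-complete, via short $P_5$-avoidance arguments using the remaining vertices of $C$. Analysing the auxiliary sets then produces either a configuration matching one of $G_2,\ldots,G_{10}$ after $P_4$-free expansion of the $A_i$'s, or else the partition structure $(A_1,\ldots,A_7)$ required by $\calH$, including the component-module property on $A_7$ and the condition that every vertex of $A_7$ has a neighbour in $A_6$.

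The main obstacle will be the long case split in the final step: once several auxiliary types coexist one must systematically eliminate intermediate configurations so that the graph collapses exactly to a $P_4$-free expansion of some $G_i$ or to a member of $\calH$. In particular, extracting the component-module property on $A_7$ (rather than allowing some more permissive adjacency pattern) is the delicate point and requires repeated, careful local applications of both the $P_5$ and the gem constraints; this is where I expect most of the technical work of a full write-up to lie.
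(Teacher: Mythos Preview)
The paper does not prove this theorem: it is quoted verbatim from \cite{Chudetal2020} and used as a black box throughout Section~\ref{sec:critP5gem}. There is therefore no ``paper's own proof'' to compare your attempt against.

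That said, your outline is the expected shape of the argument that appears in \cite{Chudetal2020}: reduce the imperfect case to the presence of an induced $C_5$ via the Strong Perfect Graph Theorem (using that $P_5$-freeness kills longer odd holes and gem-freeness kills odd antiholes of length at least~$7$), stratify $V(G)\setminus V(C)$ by its trace on $C$, and then run a case analysis on which strata are non-empty to land either in some $\mathcal{G}_i$ or in $\calH$. Your identification of the admissible neighbourhood types on $C$ and the definition of the $A_i$ are correct, and the sample completeness/anti-completeness arguments go through as you indicate.

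Where your write-up is genuinely incomplete is exactly where you say it is: the terminal case analysis. Turning the auxiliary strata into either one of the ten templates $G_1,\ldots,G_{10}$ (up to $P_4$-free expansion) or the seven-part partition of $\calH$ is a substantial piece of work in \cite{Chudetal2020}, and nothing in your sketch shows, for instance, why the interaction between $A_6$ and $A_7$ must satisfy the component-module condition rather than some weaker pattern, or why the list stops at exactly ten templates. For the purposes of the present paper none of this matters---the theorem is an imported tool---but if your goal were an independent proof you would need to carry out that case analysis in full rather than gesture at it.
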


Since we are interested in vertex-critical graphs, the following result will be helpful.
\begin{lemma}[\cite{Dhaliwal2017}]\label{lem:module}
If $G$ is $k$-vertex-critical with a non-trivial module $M$, then $M$ is $m$-vertex-critical for some $m<k$.
\end{lemma}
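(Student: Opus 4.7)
The plan is to let $m=\chi(G[M])$ and prove (i) $m<k$ and (ii) $G[M]$ is $m$-vertex-critical.

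For (i), since $M$ is non-trivial, pick any $v\in V(G)\setminus M$. By vertex-criticality of $G$, $\chi(G-v)\le k-1$, and since $G[M]$ is an induced subgraph of $G-v$ we get $m\le k-1<k$.

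For (ii), I argue by contradiction. Suppose some $u\in M$ satisfies $\chi(G[M]-u)=m$; the goal is to build a proper $(k-1)$-colouring of $G$, contradicting $\chi(G)=k$. Start from any proper $(k-1)$-colouring $c$ of $G-u$ (which exists by vertex-criticality), and let $B$ denote the set of vertices of $V(G)\setminus M$ that are complete to $M$. The module property forces $c(B)$ to be disjoint from $c(M\setminus\{u\})$, and the contradiction hypothesis forces $|c(M\setminus\{u\})|\ge \chi(G[M]-u)=m$. By permuting the palette $\{1,\ldots,k-1\}$ I may assume $c(M\setminus\{u\})$ is an initial segment $\{1,\ldots,p\}$ with $p\ge m$, so $c(B)\subseteq\{p+1,\ldots,k-1\}$.

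Now fix any proper $m$-colouring $\phi$ of $G[M]$ using colours $\{1,\ldots,m\}$, and overwrite $c$ on $M\setminus\{u\}$ by $\phi|_{M\setminus\{u\}}$. This remains proper: within $M\setminus\{u\}$ the colouring is $\phi$, and for every edge from $M\setminus\{u\}$ to $B$ the two endpoints receive colours from $\{1,\ldots,m\}$ and $\{p+1,\ldots,k-1\}$ respectively, which are disjoint since $m\le p$. Finally colour $u$ with $\phi(u)\in\{1,\ldots,m\}$: this colour avoids $c(B)\subseteq\{p+1,\ldots,k-1\}$, and it avoids $\phi(N(u)\cap M)$ by properness of $\phi$. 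The result is a proper $(k-1)$-colouring of $G$, giving the desired contradiction.

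The key step---and where the non-criticality assumption on $u$ is genuinely used---is the bound $|c(M\setminus\{u\})|\ge m$, which gives exactly enough slack in the palette $\{1,\ldots,k-1\}$ to simultaneously accommodate a fresh $m$-colouring of $G[M]$ on one side of the module and the colours $c(B)$ on the other. Once the disjointness afforded by the module property is combined with this slack, the colour $\phi(u)$ is automatically free at $u$, which is what makes the permutation-and-overwrite construction go through.
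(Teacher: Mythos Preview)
Your proof is correct. The paper does not actually supply a proof of this lemma; it merely cites it from~\cite{Dhaliwal2017}, so there is nothing in the paper to compare against. Your argument is the standard one: the module property guarantees that in any proper colouring the colours on $M$ are disjoint from those on its external neighbourhood $B$, and the assumption $\chi(G[M]-u)=m$ gives you at least $m$ colours on $M\setminus\{u\}$ in the $(k-1)$-colouring of $G-u$, which is exactly the room needed to substitute in an optimal $m$-colouring of $G[M]$ (including $u$) without clashing with $c(B)$. Every step checks out, including the verification that edges between $M$ and $V(G)\setminus M$ all go to $B$ (by the module definition), so the recoloured map is indeed proper on all of $G$.
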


\begin{lemma}\label{lem:kcritinGastorHast}
If $G$ is a $k$-vertex-critical $(P_5,\text{ gem})$-free graph, then $G$ is $K_k$, $G\in \Hstar$, or $G\in \mathcal{G}_i^{\ast}$ for some $i=1,\ldots 10$.
\end{lemma}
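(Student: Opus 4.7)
The plan is to apply Theorem~\ref{thm:P5gemfreestructure} and handle the three resulting cases separately, using in each case a uniform strategy: combine Lemma~\ref{lem:module} with the fact that a perfect $k$-vertex-critical graph must be $K_k$. I would first establish the latter as an auxiliary observation: if $H$ is perfect and $k$-vertex-critical, then $\chi(H)=\omega(H)=k$, so $H$ contains some $S\cong K_k$; any vertex $v\notin S$ would satisfy $\omega(H-v)\ge k$ and, by perfection, $\chi(H-v)\ge k$, contradicting criticality. Hence $H=K_k$. In particular, any non-trivial module that induces a perfect subgraph must induce a clique.

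For the case where $G$ is perfect, the auxiliary fact directly gives $G=K_k$. For the case $G\in\mathcal{G}_i$, by construction each set $Q_j$ replacing $v_j$ is a module of $G$ and induces a $P_4$-free graph. Since all $Q_\ell$ are non-empty, $Q_j\ne V(G)$; if $|Q_j|\ge 2$ then $Q_j$ is a non-trivial module, so by Lemma~\ref{lem:module} it is $m$-vertex-critical for some $m<k$. Because $P_4$-free graphs are cographs and hence perfect, the auxiliary fact forces $G[Q_j]=K_m$. Thus every $Q_j$ is a clique and $G\in\mathcal{G}_i^{\ast}$.

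The main work is the case $G\in\calH$, where I need each of $A_1,\ldots,A_5$ to be a clique and each component of $A_7$ to be a clique. From the complete/anti-complete conditions in the definition of $\calH$, each $A_i$ with $i\in\{1,\ldots,5\}$ is a module of $G$. I would next argue that each such $A_i$ is $P_4$-free: by inspecting the adjacency pattern, every $A_i$ with $i\le 5$ has some other non-empty $A_j$ that is complete to it (e.g.\ $A_2$ is complete to $A_1$, $A_3$ is complete to $A_2$, and so on), so pick $u$ in such an $A_j$; if $A_i$ contained an induced $P_4$, then $u$ together with this $P_4$ would induce a gem, contradicting gem-freeness. For a component $C$ of $G[A_7]$, the defining property that every vertex of $A_7$ has a neighbour in $A_6$, combined with the fact that $C$ is itself a module of $G$, forces any such $A_6$-neighbour of a vertex of $C$ to be complete to $C$; the same $P_4$-plus-apex argument then shows $C$ is $P_4$-free. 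Now each of these sets is a $P_4$-free module, so if non-trivial it is $m$-vertex-critical for some $m<k$ by Lemma~\ref{lem:module}, and the auxiliary fact upgrades this to a clique. Hence $A_1,\ldots,A_5$ and every component of $A_7$ are cliques, placing $G$ in $\Hstar$.

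I expect the only subtle step to be verifying, for the $\calH$ case, that there really is always a vertex outside each of $A_1,\ldots,A_5$ (and each component of $A_7$) that is complete to it; once that is checked the gem-freeness of $G$ does all the work, and the remaining arguments are direct applications of Lemma~\ref{lem:module} and perfection of cographs.
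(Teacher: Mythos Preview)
Your proposal is correct and follows essentially the same route as the paper: apply Theorem~\ref{thm:P5gemfreestructure}, then combine Lemma~\ref{lem:module} with the fact that a vertex-critical perfect graph is complete to force each relevant module to be a clique. Your treatment is in fact more thorough than the paper's own proof, which asserts $P_4$-freeness of each $A_i$ without argument and does not explicitly address the components of $A_7$ at all; your apex-vertex gem argument handles both cases uniformly.
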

\begin{proof}
Let $G$ be a $k$-vertex-critical $(P_5,\text{ gem})$-free graph. If $G$ is perfect, then $G=K_k$, otherwise $G\in\mathcal{G}_i$ for some $i\in\{1,\ldots, 10\}$, or $G\in\calH$ from Theorem~\ref{thm:P5gemfreestructure}. Suppose $G\in\mathcal{G}_i$ for some $i\in\{1,\ldots, 10\}$. By definition, every $Q_j$ of $\mathcal{G}_i$ is a non-trivial module and therefore must be $k$-vertex-critical from Lemma~\ref{lem:module}. Further, since each $Q_j$ must induce a $P_4$-free graph, it follows that each must be a clique. Therefore, $G\in\mathcal{G}_i^{\ast}$. Similarly, if $G\in \calH$, then $A_i$ is a non-trivial module that induces a $P_4$-free graph for all $i\in \{1,2,3,4,5\}$. Thus, by Lemma~\ref{lem:module}, $A_i$ is a clique for all $i\in \{1,2,3,4,5\}$ and therefore $G\in\Hstar$.
\end{proof}

With Lemma~\ref{lem:kcritinGastorHast} in hand, the most difficult case to deal with is $k$-vertex-critical graphs in $\Hstar$. We will show that there are no such graphs in $\Hstar$, but it will require several lemmas first to help refine our understanding of the structure.

\begin{lemma}\label{lem:C5sinH}
If $G\in \Hstar$ and $S\subset V(G)$ such that $G[S]\cong C_5$, then $S\subseteq \cup_{i=1}^{5}A_i$ or $S\subseteq A_6\cup A_7$.
\end{lemma}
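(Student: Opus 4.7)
The plan is to split the argument on whether $S\cap A_7=\emptyset$. Throughout I will use four structural facts that hold for every $G\in\Hstar$: each $A_i$ with $i\in\{1,\ldots,5\}$ is a clique and a module of $G$; each component of $G[A_7]$ is a clique and a module of $G$; $A_6$ is complete to $A_3\cup A_4$ and anti-complete to $A_1\cup A_2\cup A_5$; and $A_7$ is anti-complete to $A_1\cup\cdots\cup A_5$.

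For the case $S\cap A_7\ne\emptyset$ I will fix $s\in S\cap A_7$, let $C$ denote its component of $G[A_7]$, and label the cycle as $s\,s_2\,s_3\,s_4\,s_5\,s$. Since every neighbour of $s$ lies in $A_6\cup C$, this immediately gives $s_2,s_5\in A_6\cup C$. I will then show that $s_3\in A_6\cup A_7$: the placement $s_3\in A_1\cup A_2\cup A_5$ is ruled out because such vertices are anti-complete to $A_6\cup A_7$ and therefore cannot be adjacent to $s_2$; and the placement $s_3\in A_3\cup A_4$ makes $s_3$ complete to $A_6$, so the non-edge $s_3s_5$ forces $s_5\in C$, at which point the non-edge $s_2s_5$ cannot hold, either by the clique property of $C$ when $s_2\in C$, or by the module property of $C$ applied to $s_2\in A_6$ via the edge $s_2\sim s$. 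The same argument with $s_3$ and $s_4$ interchanged gives $s_4\in A_6\cup A_7$, so $S\subseteq A_6\cup A_7$.

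For the case $S\cap A_7=\emptyset$, the module-and-clique structure of each $A_i$ with $i\le 5$ first forces $|S\cap A_i|\le 1$: two such vertices $u,v$ would be adjacent on the cycle, and $u$'s other $C_5$-neighbour $w$ would be forced to be adjacent to $v$ by either the clique property (if $w\in A_i$) or the module property (if $w\notin A_i$), contradicting the induced $C_5$. Writing $k=|S\cap A_6|$, the conclusion is immediate for $k\in\{0,5\}$, and I will rule out $k\in\{1,2,3,4\}$ by a short degree count that exploits the fact that every outer vertex in $A_3\cup A_4$ is adjacent to all of $S\cap A_6$ while every outer vertex in $A_1\cup A_2\cup A_5$ is adjacent to none of it. For $k=1$ the unique $A_6$-vertex forces exactly one outer vertex in each of $A_3$ and $A_4$, producing a triangle; for $k=4$ the single outer vertex has degree $0$ or $4$ in the cycle; and for $k=2,3$ analogous counts force either a triangle on an $A_3$-vertex, an $A_4$-vertex and an $A_6$-vertex, or an outer vertex of impossible $C_5$-degree.

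I expect the main obstacle to be the subcase $s_3\in A_3\cup A_4$ in the first case: all of the pairwise adjacencies required by the $C_5$ are individually compatible with the global structure, and the contradiction appears only after invoking the module property of a component of $G[A_7]$. This is the one place in the proof where the structural hypothesis on $A_7$ does the essential work; everywhere else, the complete/anti-complete pattern of $A_1,\ldots,A_5,A_6$ together with a degree count suffices.
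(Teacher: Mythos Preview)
Your argument is correct and uses the same case split on $S\cap A_7$ as the paper, but the two proofs are organized differently in each branch.

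For $S\cap A_7\neq\emptyset$, the paper first pins down \emph{both} cycle-neighbours of the chosen $a_7$ as lying in $A_6$: it observes that a neighbour of $a_7$ in the same component of $A_7$ would, by the module property, be adjacent to the $A_6$-neighbour as well, creating a triangle. With $a_6,a_6'\in A_6$ in hand, any remaining vertex $s\notin A_6\cup A_7$ is forced into the part of the outer pentagon that is complete to $A_6$, and then $\{s,a_6,a_7,a_6'\}$ is an induced $C_4$ inside $S$, contradicting $G[S]\cong C_5$. This is shorter than your case analysis on $s_3$ (and the symmetric $s_4$), though your route is perfectly valid and uses exactly the same structural ingredients.

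For $S\cap A_7=\emptyset$, the paper simply writes ``clearly $S\subseteq\bigcup_{i=1}^5 A_i$'', whereas you spell out the module-based degree count for $k=|S\cap A_6|\in\{1,2,3,4\}$. Your added detail is a genuine service here: what makes the paper's ``clearly'' work is precisely the fact that $A_6$ is a module of $G-A_7$, and your count makes that explicit.

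One small caution: the paper is internally inconsistent about whether $A_6$ is complete or anti-complete to $A_1$ (the displayed bullet list says anti-complete, but Figure~\ref{fig:H}, the proof of this very lemma, and the proof of Lemma~\ref{lem:HisG4G10free} all use complete). You followed the bullet list. Under the ``complete'' reading your $k=1$ subcase and the subcase $s_3\in A_1$ need a one-line adjustment (move $A_1$ from the ``anti-complete to $A_6$'' bucket to the ``complete to $A_6$'' bucket); the remaining logic then goes through unchanged.
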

\begin{proof}
Let $G\in {\Hstar}$ and $S\subset V(G)$ such that $G[S]\cong C_5$. Clearly, if $S\cap A_7=\emptyset$, then $S\subseteq \cup_{i=1}^{5}A_i$. So suppose $S\cap A_7\neq \emptyset$ and let $a_7\in S\cap A_7$. Since $A_7$ is the disjoint union of cliques 
and each component is a module whose neighbours are contained in $A_6$, we must have a vertex $a_6\in A_6 \cap S$. If there is a vertex $a_7'\in S\cap A_7$ such that $a_7'\sim a_7$, then $\{a_6,a_7',a_7\}$ induces a triangle, contradicting $G[S]\cong C_5$. Since $a_7$ must have another neighbour, we must have $a_6'\in A_6$ such that $a_6\neq a_6'$, $a_6'\nsim a_6$, and $a_6'\sim a_7$. Now if $S\not\subseteq A_6\cup A_7$, then there must be a vertex $s\in S\cap (A_1\cup A_3\cup A_4)$. However, now $\{s,a_6,a_6',a_7\}$ induces a $C_4$ in $G$, contradicting $G[S]\cong C_5$. Therefore, $S\subseteq A_6\cup A_7$.
\end{proof}

\begin{lemma}\label{lem:HisG4G10free}
For all $G\in \Hstar$, $G$ is $(G_4,G_5,G_6,G_7,G_8,G_9,G_{10})$-free. 
\end{lemma}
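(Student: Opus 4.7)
The plan is to argue by contradiction: for each $i\in\{4,\ldots,10\}$, suppose some $G\in\Hstar$ contains $G_i$ as an induced subgraph. Each $G_i$ has an induced $C_5$, say $C$; applying Lemma~\ref{lem:C5sinH} to $C$ yields the dichotomy $C\subseteq \bigcup_{j=1}^{5}A_j$ (Case~1) or $C\subseteq A_6\cup A_7$ (Case~2). The goal is a contradiction in each case.

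For Case~1, since each $A_j$ ($j\le 5$) is a clique and the $A_j$'s form a blown-up $C_5$, the vertices of $C$ must lie one in each $A_j$. Up to relabelling via a $C_5$-automorphism, I may assume $v_j\in A_j$. A direct check on the adjacencies defining $\calH$ shows that every vertex $u\in V(G)\setminus C$ has $N_C(u)$ equal to one of: the three consecutive vertices $\{v_{j-1},v_j,v_{j+1}\}$ (if $u\in A_j$), the single edge $\{v_3,v_4\}$ (if $u\in A_6$), or $\emptyset$ (if $u\in A_7$). In particular, $N_C(u)$ induces one of $\emptyset$, $K_2$, or $P_3$ on $C$. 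For each $G_i$ I would exhibit an additional vertex whose neighborhood on some induced $C_5$ of $G_i$ is incompatible with this list regardless of which $C_5$-automorphism is used; for example, in $G_4$ the vertex $v_6$ satisfies $N_C(v_6)=\{v_1,v_3,v_4\}$, which induces $K_2+K_1$ on $C$. Analogous witnesses can be pointed out in each of $G_5,\ldots,G_{10}$, using a non-obvious $C_5$ in $G_7$ (for instance $\{v_1,v_3,v_5,v_6,v_7\}$) since its first five vertices do not form an induced $C_5$.

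For Case~2, I would exploit the nonemptyness of $A_3$: fix $a_3\in A_3$, which is complete to $A_6$ and anti-complete to $A_7$. The contradiction will come from exhibiting an induced $P_4\subseteq A_6$, since $\{a_3\}$ together with such a $P_4$ induces a gem in $G$. An extension of the reasoning in the proof of Lemma~\ref{lem:C5sinH}, using that components of $A_7$ are cliques acting as modules, shows $|C\cap A_7|\le 2$ with any two such vertices non-adjacent on $C$. If $|C\cap A_7|\le 1$, deleting the (at most one) $A_7$-vertex of $C$ leaves four consecutive vertices of $C$ in $A_6$, which form the desired $P_4$ outright. If $|C\cap A_7|=2$, the three $A_6$-vertices of $C$ induce only a $K_2+K_1$, so I would recruit additional vertices of $G_i$: any $u\in V(G_i)\setminus C$ adjacent to an $A_7$-vertex of $C$ must lie in $A_6\cup A_7$ (by anti-completeness of $A_1\cup\cdots\cup A_5$ to $A_7$), and the module condition on $A_7$-components together with $u$'s specific adjacencies in $G_i$ forces $u\in A_6$ in every relevant instance; combining these forced $A_6$-placements with $A_6\cap C$ then produces a $P_4\subseteq A_6$. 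The main obstacle is this last sub-case, which may need graph-specific verification across $G_4,\ldots,G_{10}$; however, the uniform template of \emph{forced $A_6$-placements combined with $A_6\cap C$ yielding the required $P_4$} should close each of the seven cases, as the $G_4$ warm-up (where a single forced $A_6$-placement suffices) already illustrates.
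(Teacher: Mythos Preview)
Your Case~1 has a genuine gap, though it stems from a typo in the paper's own bullet list for $\calH$. The bullets say $A_1$ is anti-complete to $A_6$, but the figure and every subsequent proof in the paper (including the paper's proof of this very lemma, which uses the path $A_5\text{--}A_1\text{--}A_6\text{--}A_3$, and the derivation of inequality~(\ref{eq:A6<A2}) in Theorem~\ref{thm:nocritinH}) require $A_6$ to be complete to $A_1\cup A_3\cup A_4$. Under the intended definition, a vertex $u\in A_6$ has $N_C(u)=\{v_1,v_3,v_4\}$, which induces $K_2+K_1$ on $C$. Hence your witness $v_6$ in $G_4$, with exactly this neighbourhood, is \emph{not} a contradiction: it could simply lie in $A_6$. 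The paper repairs this by using the module property of $A_6$ in $G-A_7$: any extra vertex of $G_i$ whose $C$-neighbourhood is not a consecutive $P_3$ must lie in $A_6$, and therefore any two such vertices must have the \emph{same} $C$-neighbourhood. In $G_4$ one then compares $N_C(v_6)=\{v_1,v_3,v_4\}$ with $N_C(v_7)=\{v_1,v_2,v_4\}$ to obtain the contradiction, and analogous pairs exist in $G_5,\ldots,G_{10}$.

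Your Case~2 takes a genuinely different route from the paper, and the hardest sub-case is only sketched. The paper does not try to build a $P_4$ inside $A_6$. Instead it first argues that $S\cap A_7$ is a stable set (using the module property of $A_7$-components), and then observes the uniform fact that for every $i\in\{4,\ldots,10\}$, deleting \emph{any} stable set from $G_i$ leaves an induced $P_4$. This $P_4$ lives in $S\setminus A_7$; since $A_6$ is $P_4$-free (any $P_4$ there would form a gem with a vertex of $A_3$), the $P_4$ must leave $A_6\cup A_7$, and the paper then tracks its shape through $A_1,\ldots,A_5,A_6$ to reach a contradiction (isolating $G_7$ as the only delicate case). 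This avoids your graph-by-graph verification in the $|C\cap A_7|=2$ sub-case, which you acknowledge is incomplete; your template of ``forced $A_6$-placements yielding a $P_4$'' can be made to work for $G_4$, but carrying it through all seven graphs and all placements of $C\cap A_7$ is real work that you have not done.
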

\begin{proof}

By way of contradiction, suppose $G\in {\Hstar}$ and $S\subset V(G)$ such that $G[S]\cong G_i$ for some $i\in \{4,5,\ldots, 10\}$. Thus, there is a $D\subset S$ such that $D$ induces a $C_5$ and it follows from Lemma~\ref{lem:C5sinH} that $D\subseteq \cup_{i=1}^{5}A_i$ or $D\subseteq A_6\cup A_7$. 

Suppose that $D\subseteq \cup_{i=1}^{5}A_i$. Then each of the five vertices of $D$ belongs to a distinct $A_i$ for $i = 1,2, \ldots, 5$, since each of these $A_i$'s induces a clique. From the structure of graphs in $\Hstar$, every neighbour of a vertex in $D$ must be in $A_6$. Since $A_6$ is a module in $G-A_7$, it must be the case that $N(u)\cap D=N(v)\cap D$ for all $u,v\in S\setminus D$ such that $N(u)\cap D\neq\emptyset$ and $N(v)\cap D\neq\emptyset$.
However, we find that this is not the case for any induced $C_5$ of $G_i$ for any $i\in\{4,5,\ldots,10\}$. For example, for $G_4$, $C=\{v_1,v_2,v_3,v_4,v_5\}$ induces the only $C_5$ in the graph and $N(v_6)\cap C=\{v_1,v_3,v_4\}\neq N(v_7)\cap C=\{v_1,v_2,v_4\}$.  Therefore, $S$ cannot induce $G_i$ for $i = 4, \dots 10$, a contradiction. Thus, $D\not\subseteq \cup_{i=1}^{5}A_i$.

Thus, we may assume that $D\subseteq A_6\cup A_7$. Since each component of $A_7$ is a module and a clique, if $x,y\in D \cap A_7$ such that $x\sim y$, then $N[x]=N[y]$, otherwise a triangle is induced. But  for all $j\in \{4,5,\ldots, 10\}$ and $u,v\in G_j$ such that $u\sim v$, $N[u]\neq N[v]$. Therefore, $D\cap A_7$ induces a stable set in $G$. For any stable set $T$ of $G_j$, it is easily verifiable that $G_j-T$ contains an induced $P_4$. Therefore, $S\setminus A_7$ must contain a subset $P$ such that $G[P]\cong P_4$. Note that $A_6$ is $P_4$-free since $G\in\Hstar$, so $P\not\subseteq A_6$ and therefore $S\not\subseteq A_7\cup A_6$. Further, $|P\cap A_7|=1$ since $A_6$ is complete to $A_1\cup A_3\cup A_4$. Thus $P$ must either follow the path $A_5-A_1-A_6-A_3$ or $A_2-A_1-A_6-A_4$. In either case, $P$ starts at a vertex with no neighbours in $D$ (i.e., an induced $C_5$) and ends at a vertex not in $D$ but with a neighbour in $D$. Thus, $S$ must induce $G_7$ as it can readily be checked that this is the only graph with vertices that have no neighbours on an induced $C_5$. In fact, $G_7$ has exactly two such vertices: $v_7$ has no neighbours on the $C_5$ induced by $C=\{v_2,v_3,v_6,v_4,v_8\}$ and $v_8$ has no neighbours on the $C_5$ induced by $C'=\{v_1,v_3,v_6,v_5,v_7\}$. However, all induced $P_4$'s in $G_7$ that start with $v_7$ end on a vertex in the set $\{v_6, v_8, v_3\}\subseteq C$, and all that start with $v_8$ end on a vertex in the set $\{v_6, v_7, v_3\}\subseteq C'$. This contradicts the existence of $P$. Therefore, $D\not\subseteq A_6\cup A_7$ which completes the proof.
\end{proof}


\begin{lemma}[Lemma 3.1 in \cite{Hoang2015}]\label{lem:similar}
	Let $G$ be a graph with two disjoint cliques $A$ and $B$ with $A= a_1, a_2, \ldots, a_m$ and $B = b_1, b_2 , \ldots , b_m$ such that $N(a_i) \setminus A \subseteq N(b_i) \setminus B$ for all $1 \leq i \leq m$. Then $G$ is not $k$-vertex-critical.
\end{lemma}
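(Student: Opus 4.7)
The plan is to derive a contradiction from $k$-vertex-criticality by exhibiting a proper $(k-1)$-colouring of $G$. First I would observe that the hypothesis $N(a_i)\setminus A \subseteq N(b_i)\setminus B$ silently forces $A$ to be anti-complete to $B$: if $a_i \sim b_j$ for any indices $i,j$, then $b_j \in N(a_i)\setminus A$, but $b_j \in B$ means $b_j \notin N(b_i)\setminus B$, contradicting the inclusion. This observation clears the way for a colour-transfer argument that would otherwise be blocked by stray edges between $A$ and $B$.

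Next, since $G$ is $k$-vertex-critical, $G - a_1$ admits a proper $(k-1)$-colouring, and restricting it to the subgraph $G - A$ gives a proper $(k-1)$-colouring $c$ of $G - A$. I would then extend $c$ to a colouring $c'$ of all of $G$ by setting $c'(v) = c(v)$ for $v \in V(G)\setminus A$ and $c'(a_i) = c(b_i)$ for each $i \in \{1,\ldots,m\}$.

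The verification that $c'$ is proper splits into three cases. For distinct $a_i, a_j \in A$, the clique $B$ gives $c(b_i) \neq c(b_j)$, hence $c'(a_i) \neq c'(a_j)$. For an edge $a_i v$ with $v \notin A$, the inclusion $N(a_i)\setminus A \subseteq N(b_i)\setminus B$ places $v$ in $N(b_i)\setminus B$, so $v \sim b_i$ in $G - A$ and therefore $c'(a_i) = c(b_i) \neq c(v) = c'(v)$. All remaining edges lie inside $V(G)\setminus A$ and are already properly coloured by $c$. This yields $\chi(G) \leq k-1$, contradicting $\chi(G) = k$.

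I do not anticipate a serious obstacle here; the whole argument is a one-shot extension of a colouring. The only subtlety worth flagging is the first step: recognising that the neighbourhood-containment hypothesis already rules out $A$--$B$ edges, without which the transfer $a_i \mapsto c(b_i)$ would fail on cross-edges between the two cliques.
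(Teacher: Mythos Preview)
Your argument is correct. The paper does not give its own proof of this lemma---it simply quotes it from \cite{Hoang2015}---and the proof there is exactly the colour-transfer argument you outline: pass to a $(k-1)$-colouring of $G-A$ and copy the colours of $B$ onto $A$, using the clique structure of $B$ for internal edges of $A$ and the containment $N(a_i)\setminus A\subseteq N(b_i)\setminus B$ for edges leaving $A$. Your explicit observation that the hypothesis forces $A$ to be anti-complete to $B$ is a nice clarification that the original source leaves implicit.
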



While we are still focused on showing there are no vertex-critical graphs in $\Hstar$, the following lemma will be required.

\begin{lemma}\label{lem:nocritinG2G3}
There are no vertex-critical graphs in $\mathcal{G}_2\cup \mathcal{G}_3$.
\end{lemma}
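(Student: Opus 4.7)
The plan is to derive a contradiction from the assumption that $G \in \mathcal{G}_2 \cup \mathcal{G}_3$ is $k$-vertex-critical. I would first reduce to the clique-expansion case exactly as in Lemma~\ref{lem:kcritinGastorHast}: each $Q_j$ is a non-trivial $P_4$-free module, so by Lemma~\ref{lem:module} it is vertex-critical, and any vertex-critical $P_4$-free graph must be a clique. Hence one may assume $G \in \mathcal{G}_2^* \cup \mathcal{G}_3^*$.

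The key structural observation is that the base graphs each contain a vertex whose neighbourhood contains that of another. Direct inspection of Figure~\ref{fig:G1G7} shows that $N(v_2) \subseteq N(v_6)$ and $N(v_5) \subseteq N(v_6)$ in $G_2$, and that $N(v_5) \subseteq N(v_6)$ together with $N(v_7) \subseteq N(v_3)$ in $G_3$. These inclusions lift to the expansion: for $G \in \mathcal{G}_2^*$ and any $a \in Q_2$, $b \in Q_6$,
\[
N_G(a) \setminus Q_2 \;=\; Q_1 \cup Q_3 \;\subseteq\; Q_1 \cup Q_3 \cup Q_4 \cup (Q_6 \setminus \{b\}) \;=\; N_G(b) \setminus Q_6.
\]
Therefore, whenever $|Q_6| \geq |Q_2|$, the cliques $A = Q_2$ and $B \subseteq Q_6$ with $|B| = |Q_2|$ (under any bijection) satisfy the hypothesis of Lemma~\ref{lem:similar}, contradicting $k$-vertex-criticality. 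Symmetrically, when $|Q_6| \geq |Q_5|$, one takes $A = Q_5$ and $B \subseteq Q_6$. For $G \in \mathcal{G}_3^*$, the analogous choices are $A = Q_5$ with $B \subseteq Q_6$ (when $|Q_6| \geq |Q_5|$) and $A = Q_7$ with $B \subseteq Q_3$ (when $|Q_3| \geq |Q_7|$).

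The main obstacle I expect is the remaining regime, where the dominating clique is too small to accommodate the dominated one: $|Q_6| < \min(|Q_2|, |Q_5|)$ for $\mathcal{G}_2^*$, or $|Q_6| < |Q_5|$ together with $|Q_3| < |Q_7|$ for $\mathcal{G}_3^*$. In these cases Lemma~\ref{lem:similar} does not apply with any natural clique choice, so I would turn to a direct chromatic-number argument. For $G \in \mathcal{G}_2^*$, note that $G - Q_6 \in \mathcal{G}_1^*$ is a $C_5$-blowup; extending an optimal colouring of $G - Q_6$ to $G$ yields the identity $\chi(G) = \max(\omega(G), \chi(G - Q_6))$. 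If $\chi(G) = \omega(G)$, then removing a vertex $v \in Q_1 \cup Q_2 \cup Q_5$ (which lies outside the unique maximum clique $Q_3 \cup Q_4 \cup Q_6$) leaves $\omega(G - v) = \omega(G) = \chi(G)$, forcing $\chi(G - v) = \chi(G)$ and contradicting criticality. If instead $\chi(G) > \omega(G)$, then $\chi(G) = \chi(G - Q_6)$, so removing any vertex of $Q_6$ preserves the chromatic number, again a contradiction. An entirely analogous argument handles the corresponding hard regime in $\mathcal{G}_3^*$.
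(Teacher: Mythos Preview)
Your reduction to $\mathcal{G}_2^\ast \cup \mathcal{G}_3^\ast$ via Lemma~\ref{lem:module} and the applications of Lemma~\ref{lem:similar} are correct. The gap is the asserted identity $\chi(G) = \max\bigl(\omega(G),\, \chi(G - Q_6)\bigr)$. The phrase ``extending an optimal colouring of $G - Q_6$ to $G$'' is precisely the step that needs justification: you must argue that some optimal colouring of the $C_5$-blowup uses at most $\chi(G-Q_6)-|Q_6|$ colours on $Q_1 \cup Q_3 \cup Q_4$ (or that $\omega(G)$ colours suffice overall), and this is not automatic from the inequalities $|Q_6|<|Q_2|,|Q_5|$ alone. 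Establishing it requires a recolouring argument of essentially the same weight as the lemma itself. A minor point: the claim that $Q_3 \cup Q_4 \cup Q_6$ is ``the unique maximum clique'' can fail (for instance when $|Q_1|+|Q_2|$ is largest); the case $\chi(G)=\omega(G)$ is handled more cleanly by noting that a $k$-vertex-critical graph with $\omega=k$ must be $K_k$, which $G$ is not.

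The $\mathcal{G}_3^\ast$ case is a more serious omission. Deleting $Q_6$ from $G \in \mathcal{G}_3^\ast$ does \emph{not} leave a graph in $\mathcal{G}_1^\ast$: the bag $Q_7$ (complete to $Q_2$ and $Q_4$) is still present, so neither Lemma~\ref{lem:chromaticnumberofC5exp} nor your identity applies, and the extra inequality $|Q_3|<|Q_7|$ you derived plays no evident role. By contrast, the paper's proof treats $\mathcal{G}_2^\ast$ and $\mathcal{G}_3^\ast$ uniformly using only the single inequality $|Q_6|<|Q_5|$: fix $x \in Q_6$, use criticality to get a $k$-colouring in which $x$ alone has colour $k$, deduce $c(Q_5)\subseteq c(Q_3)\cup c(Q_6)$ (since every colour $<k$ appears on $N(x)$), and then swap colours between a suitable vertex of $Q_5$ and one of $Q_1$ to free a non-$k$ colour for $x$. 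This argument never touches $Q_7$, so it covers both families at once.
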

\begin{proof}
Suppose $G$ is a $k$-vertex-critical graph in $\mathcal{G}_2\cup\mathcal{G}_3$ for some $k$  {and recall that $Q_i$ denotes the set of vertices replacing $v_i$ as labelled in Figure~\ref{fig:G1G7}}. By Lemma~\ref{lem:module}, it follows that $G\in \mathcal{G}_2^{\ast}\cup\mathcal{G}^{\ast}_3$. If $|Q_6| \geq |Q_5|$, then $Q_6$ contains a clique $Q'_6$ with $|Q'_6| = |Q_5|$ vertices such that $Q'_6$ and $Q_5$ that satisfy the hypothesis of Lemma~\ref{lem:similar}, so $G$ is not $k$-vertex-critical, a contradiction. So, we have $|Q_6|<|Q_5|$.

Fix a $k$-colouring of $c:V(G)\to\{1,\ldots,k\}$ of $G$ such that $x\in Q_6$ is the only vertex with colour $k$. Since $\chi(G)=k$, all colours in $\{1,\ldots,k-1\}$ appear in the neighbourhood of $x$, so $c(Q_5)\subseteq c(Q_6)\cup c(Q_3)$. Note that $|c(Q_6)|=|Q_6|$ since $Q_6$ is a clique. 

Since $|Q_6|< |Q_5|$, it follows that there is a colour $c_5\in c(Q_5)\cap c(Q_3)$. Let $x_5\in Q_5$ be such that $c(x_5)=c_5$. Further, if $c(Q_1)\subseteq c(Q_4)\cup c(Q_3)$, then $k=|Q_6|+|Q_4|+|Q_3|=\chi(G[Q_3\cup Q_4\cup Q_6])$, contradicting $G$ being $k$-vertex-critical. Therefore, there is a colour $c_1\in c(Q_1)\setminus (c(Q_4)\cup c(Q_3))$. Let $x_1\in Q_1$ be such that $c(x_1)=c_1$. Thus, $c':V(G)\to\{1,2,\ldots,k\}$ defined by $c'(v)=c(v)$ for all $v\in V(G)\setminus\{x_1,x_5\}$ and $c'(x_5)=c_1$ and $c'(x_1)=c_5$ is a $k$-colouring of $G$. Finally, we can now change the colour of $x$ to $c_1$ to obtain a $(k-1)$-colouring of $G$, a contradiction. Therefore $\chi(G-x)=\chi(G)$ for all $x\in Q_6$, and since $Q_6\neq \emptyset$, this contradicts $G$ being $k$-vertex-critical. Therefore, there are no $k$-vertex-critical graphs in $\mathcal{G}_2\cup\mathcal{G}_3$.
\end{proof}

\begin{lemma}[\cite{KarthickMaffraygemcogem}]\label{lem:chromaticnumberofC5exp}
If $G\in \mathcal{G}_{1}^{\ast}$ has order $n$, then $\chi(G)=\max\left(\omega(G),\left\lceil\frac{n}{2}\right\rceil\right)$.
\end{lemma}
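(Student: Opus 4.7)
The plan is to prove matching lower and upper bounds on $\chi(G)$. For the lower bound, $\chi(G)\ge\omega(G)$ is immediate; moreover, since $G$ is a clique expansion of $C_5$, every independent set picks at most one vertex from each clique $Q_i$ and the set of cliques it meets must form an independent set in $C_5$, so $\alpha(G)\le\alpha(C_5)=2$ and hence $\chi(G)\ge\lceil n/\alpha(G)\rceil\ge\lceil n/2\rceil$.

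For the upper bound I would construct a proper colouring with $k=\max(\omega(G),\lceil n/2\rceil)$ colours. Since $\alpha(G)\le 2$, every colour class is either a singleton or a pair $\{u,v\}$ with $u\in Q_i$ and $v\in Q_j$ for some non-edge $ij$ of $C_5$, so the task is equivalent to finding a matching of size $n-k$ in $\overline{G}$. Writing $n_i=|Q_i|$, I would split into two cases according to which quantity attains the maximum.

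If $\omega\ge\lceil n/2\rceil$, cyclically relabel so that $\omega=n_1+n_2$. The maximality gives $n_3\le n_1$ (from $\omega\ge n_2+n_3$) and $n_5\le n_2$ (from $\omega\ge n_5+n_1$), and $\omega\ge n/2$ forces $n_3+n_4+n_5\le n_1+n_2$. I would then pair each vertex of $Q_3$ with a distinct vertex of $Q_1$, each vertex of $Q_5$ with a distinct vertex of $Q_2$, and distribute the $n_4$ vertices of $Q_4$ among the $(n_1-n_3)+(n_2-n_5)\ge n_4$ leftover vertices of $Q_1\cup Q_2$. This yields $n-\omega$ disjoint non-edges, producing a proper $\omega$-colouring after the remaining singletons are coloured individually.

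If $\omega<\lceil n/2\rceil$, every consecutive pair sum satisfies $n_i+n_{i+1}\le\omega<n/2$. Letting $a_{ij}$ denote the number of matching pairs between $Q_i$ and $Q_j$ for each non-edge $ij$ of $C_5$ and imposing the five tightness conditions $\sum_{j:\,ij\notin E(C_5)}a_{ij}=n_i$, the resulting linear system has the unique solution $a_{13}=(n_1+n_2+n_3-n_4-n_5)/2$ together with its four cyclic rotations; each is nonnegative because the two $n_p$'s appearing with a minus sign form a consecutive pair of $C_5$ whose sum is at most $\omega<n/2$, and summing the five tightness equations yields $\sum a_{ij}=n/2$. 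When $n$ is even the $a_{ij}$ are integers and give a perfect matching of $\overline{G}$; when $n$ is odd they are half-integers, and I would round three of the five entries down by $\tfrac12$ and the other two up by $\tfrac12$ in an alternating pattern around the constraint cycle, which keeps every degree constraint satisfied and, using the sharper $n_i+n_{i+1}\le(n-1)/2$ available when $n$ is odd, keeps every rounded value nonnegative. This yields an integer matching of size $\lfloor n/2\rfloor$ and hence a $\lceil n/2\rceil$-colouring. The main obstacle will be this parity step: verifying that each $a_{ij}\ge\tfrac12$ so that the downward rounding is feasible reduces to the Case~2 bound on consecutive pair sums, but must be checked for all five cyclic variants.
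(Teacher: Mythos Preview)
Your argument is correct. The lower bound via $\alpha(G)\le 2$ is standard, and both cases of the upper bound go through as you describe. In Case~1 the three inequalities $n_3\le n_1$, $n_5\le n_2$, and $n_3+n_4+n_5\le n_1+n_2$ are exactly what is needed to build the matching of size $n-\omega$ using only non-edges of $C_5$. In Case~2 the linear system does have the unique solution $a_{13}=(n-2(n_4+n_5))/2$ and its cyclic rotations; when $n$ is even the strict inequality $n_i+n_{i+1}\le\omega\le n/2-1$ forces each $a_{ij}\ge 1$ and integrality is automatic, and when $n$ is odd the bound $n_i+n_{i+1}\le(n-1)/2$ gives each $a_{ij}\ge 1/2$, so the alternating rounding pattern (three down, two up, no two ups adjacent on the constraint $5$-cycle) produces nonnegative integers summing to $\lfloor n/2\rfloor$ with every degree constraint respected. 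The parity step you flagged as the main obstacle is thus fine.

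As for comparison with the paper: there is nothing to compare. The paper does not prove this lemma; it is quoted from \cite{KarthickMaffraygemcogem} and used as a black box in the proof of Theorem~\ref{thm:nocritinH}. Your proposal supplies a self-contained proof where the paper simply cites one, so in that sense it is strictly more than what the paper does.
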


\begin{theorem}\label{thm:nocritinH}
There are no critical graphs in $\calH$.
\end{theorem}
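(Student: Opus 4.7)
The plan is to suppose for contradiction that $G\in\calH$ is $k$-vertex-critical, apply Lemma~\ref{lem:kcritinGastorHast} to obtain $G\in\Hstar$, and then reduce $G$ to a clique expansion of $G_2$ so that Lemma~\ref{lem:nocritinG2G3} delivers the contradiction.

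The first step is to show $A_7=\emptyset$. Suppose otherwise and let $C$ be a component of $G[A_7]$; in $\Hstar$, $C$ is a clique and, by the definition of $\calH$, a module of $G$ with common $A_6$-neighbourhood $N\subseteq A_6$. Since $A_3\cup A_4$ is a clique that is completely joined to $A_6$ and anti-complete to $A_7$, for any clique $B\subseteq A_3\cup A_4$ of size $|C|$ and any bijection $c_i\mapsto b_i$ between $C$ and $B$ one has $N(c_i)\setminus C=N\subseteq A_6\subseteq N(b_i)\setminus B$, so Lemma~\ref{lem:similar} contradicts $k$-vertex-criticality whenever $n_3+n_4\geq|C|$. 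The subcase $|C|>n_3+n_4$ is the technical core: here I would observe that criticality forces every $(k-1)$-colouring of $G-v$ (for any $v\in C$) to exhaust all $k-1$ colours on $N(v)=(C\setminus\{v\})\cup N$, which combined with the clique bound $|C|+\omega(N)\le\omega(G)\le k$ pins $|C|+\omega(N)$ to $k$; exploiting this equality together with the cograph structure of $A_6$, I would locate a clique $B\subseteq(A_3\cup A_4)\cup K$ for a suitable clique $K\subseteq A_6$, each of whose members remains adjacent to every vertex of $N$, and apply Lemma~\ref{lem:similar} once more with $A=C$ to finish this subcase.

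Once $A_7=\emptyset$ has been secured, $A_6$ becomes a module of $G$ (its external neighbours are exactly the $A_i$'s to which it is completely joined), and $G[A_6]$ is $P_4$-free because any induced $P_4$ in $A_6$ together with a vertex of the non-empty $A_3$ (which is complete to $A_6$) would induce a gem. Lemma~\ref{lem:module} then forces $A_6$ to be $m$-vertex-critical for some $m<k$, and a vertex-critical $P_4$-free graph is necessarily a clique: a disconnected cograph is not vertex-critical, since removing a vertex from a component of smaller chromatic number fails to decrease $\chi$, while for a connected cograph the join decomposition $H=H_1\vee H_2$ satisfies $\chi(H)=\chi(H_1)+\chi(H_2)$, so vertex-criticality of $H$ descends to vertex-criticality of both $H_1$ and $H_2$, which are cliques by induction on size, making $H$ a clique. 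Thus $A_6$ is a clique.

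With every $A_i$ for $i=1,\ldots,6$ now a clique and $A_7=\emptyset$, the graph $G$ is a clique expansion of the six-vertex graph formed from the $C_5$ on $\{v_1,\ldots,v_5\}$ together with a sixth vertex adjacent to $v_1,v_3,v_4$, which is precisely $G_2$ in Figure~\ref{fig:G1G7}. Therefore $G\in\mathcal{G}_2^{\ast}$, contradicting Lemma~\ref{lem:nocritinG2G3}. The main obstacle in this plan is the subcase $|C|>n_3+n_4$ in the reduction to $A_7=\emptyset$: the direct application of Lemma~\ref{lem:similar} fails, and one must carefully leverage the forced equality $|C|+\omega(N)=\omega(G)=k$ together with the cograph structure of $A_6$ to build the replacement clique $B$.
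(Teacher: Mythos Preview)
Your overall strategy---reduce to $\Hstar$, eliminate $A_7$, observe that what remains is a clique expansion of $G_2$, and then invoke Lemma~\ref{lem:nocritinG2G3}---is different from the paper's induction via good stable sets, and it is an attractive idea. However, the ``technical core'' subcase $|C|>|A_3|+|A_4|$ is not merely technical; as written, it contains a genuine error.

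You claim that criticality together with the clique bound $|C|+\omega(N)\le\omega(G)$ ``pins $|C|+\omega(N)$ to $k$''. But the colouring argument you invoke does not give a lower bound on $\omega(N)$. In any $(k-1)$-colouring of $G-v$, the $k-1$ colours on $N(v)=(C\setminus\{v\})\cup N$ decompose as $(|C|-1)+|c(N)|$, so $|c(N)|=k-|C|$; since $G[N]$ is a cograph one has $|c(N)|\ge\chi(G[N])=\omega(N)$, which yields only the \emph{upper} bound $\omega(N)\le k-|C|$, not the equality you need. Nothing prevents the particular $(k-1)$-colouring from being wasteful on $N$, using many more colours than $\omega(N)$. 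So the equality $|C|+\omega(N)=k$ (indeed, even $=k-1$) is not forced, and with it your mechanism for building a clique $K\subseteq A_6$ complete to $N$ and of size at least $|C|-|A_3|-|A_4|$ evaporates. You would need an entirely different argument here---for instance, a comparison clique drawn from $A_6$ itself, but that requires control over the cograph $A_6$ that you have not established.

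Two smaller remarks. First, by the paper's definition every $A_i$ (including $A_7$) is non-empty, so once you succeed in showing $A_7=\emptyset$ you are already done; the subsequent reduction to $\mathcal{G}_2^{\ast}$ is superfluous. Second, your easy subcase $|C|\le|A_3|+|A_4|$ via Lemma~\ref{lem:similar} is correct and clean, and if you could somehow guarantee that inequality (say, by first bounding $|A_3|,|A_4|$ from below), the whole plan would go through; but as it stands, the gap in the other subcase is fatal.
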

\begin{proof}
By induction on $k$.  For $k=1,2,3$ the result is clear. For some $k>3$, suppose there are no $k'$-vertex-critical graphs in $\calH$ for all $k'<k$ and suppose $G$ is a $k$-vertex-critical in $\calH$. By Lemma~\ref{lem:module}, it follows that $G\in \Hstar$. Let $T_1,T_2,\ldots,T_m$ be the components of $A_7$, $C=\cup_{i=1}^{5}A_i$, and $q=\omega(G)$. Note that $q<k$ since $G$ is $k$-vertex-critical and not complete.
From Lemma~\ref{lem:similar}, it follows that 
\begin{eqnarray}
\omega(A_6)&<&|A_2|\label{eq:A6<A2}\\
\omega(A_6)&<&|A_5|.\label{eq:A6<A5}
\end{eqnarray}

\noindent It follows from (\ref{eq:A6<A2}) and (\ref{eq:A6<A5}) that no clique in $A_1\cup A_6$ is a $q$-clique.  {We now consider two cases}.\\




\noindent\textit{{Case 1:}} {For some $c_1,c_2\in C$ and $t_i\in T_i$, $S=\{c_1,c_2,t_1,t_2,\ldots, t_m\}$ is a good stable set in $G$.}\\

Then $G-S$ must be $(k-1)$-chromatic and $\omega(G-S)<q\le k-1$. If $G-S$ does not have an induced $C_5$, then since it is $(P_5,\text{ gem})$-free, it must be perfect by the Strong Perfect Graph Theorem~\cite{Chudnovsky2006} and therefore must contain $K_{k-1}$, contradicting $\omega(G-S)<k-1$.  Thus we may assume $G - S$ contains an induced $C_5$.  Since there are no $(k-1)$-vertex-critical graphs in $\calH$ by the inductive hypothesis, $G-S$ must have a $(k-1)$-vertex-critical induced subgraph $F$ that is in $\mathcal{G}_i^{\ast}$ for some $i\in \{1,\ldots,10\}$ by Theorem~\ref{thm:P5gemfreestructure}.  By Lemma~\ref{lem:HisG4G10free}, we have $F\in \mathcal{G}_{1}^{\ast}\cup \mathcal{G}_{2}^{\ast}\cup \mathcal{G}_{3}^{\ast}$. From Lemma~\ref{lem:nocritinG2G3}, we must have $F\in \mathcal{G}_{1}^{\ast}$. Since $\omega(G-S)<k-1$, Lemma~\ref{lem:chromaticnumberofC5exp} gives $k-1=\left\lceil\frac{|V(F)|}{2}\right\rceil$. By Lemma~\ref{lem:C5sinH}, we must have $V(F)\subseteq C\setminus\{c_1,c_2\}$ or $V(F)\subseteq A_6\cup A_7\setminus\{t_1,t_2\ldots, t_m\}$.

If $V(F)\subseteq C\setminus\{c_1,c_2\}$, then $G[V(F)\cup\{c_1,c_2\}]\in \mathcal{G}_1^{\ast}$ and has order $|V(F)|+2$. Therefore, by Lemma~\ref{lem:chromaticnumberofC5exp}, we have $\chi(G[V(F)\cup\{c_1,c_2\}])=k$, which contradicts $G$ being $k$-vertex-critical. So, we may assume  $V(F)\subseteq A_6\cup A_7\setminus\{t_1,t_2\ldots, t_m\}$. If there is at most one $i\in\{1,\ldots,m\}$ such that $T_i\cap V(F)\neq\emptyset$, then $A_6\cap V(F)$ must induce a graph containing a $P_4$. But then $G[A_6\cup\{x\}]$ contains an induced gem for all $x\in A_1\cup A_3\cup A_4$, contradicting $G$ being gem-free. So, for some distinct $i,j\in \{1,\ldots, m\}$, we have $T_i\cap V(F)\neq \emptyset$ and $T_j\cap V(F)\neq\emptyset$. Therefore, $G[V(F)\cup\{t_i,t_j\}]\in \mathcal{G}_1^{\ast}$ and has order $|V(F)|+2$. Hence $\chi(G[V(F)\cup\{t_i,t_j\}])=k$ by Lemma~\ref{lem:chromaticnumberofC5exp}, which contradicts $G$ being $k$-vertex-critical.\\

\noindent\textit{ {Case 2:}}  {For all $c_1,c_2\in C$ and $t_i\in T_i$, $S=\{c_1,c_2,t_1,t_2,\ldots, t_m\}$ is not a good stable set in $G$.}\\

Then since no clique in $A_1\cup A_6$ is a $q$-clique, from the proof of \cite[Theorem 8]{Chudetal2020}, we have $|A_1|=|A_3|=|A_4|=a$ and $|A_2|=|A_5|=q-a$ (for example, if $A_1\cup A_2$ is not a $q$-clique, then the set $S$ defined in the next sentence is a good stable set). Let $S=\{x_3,x_5,t_1,t_2,\ldots, t_m\}$ for some $t_i\in T_i$, $x_3\in A_3$, and $x_5\in A_5$. If $q<k-1$, then since $G$ is $k$-vertex-critical and $S$ is a stable set, we have $\chi(G-S)=k-1$ and have $\omega(G-S)\le q< k-1$ and similar to above, $G-S$ must have a $(k-1)$-vertex-critical induced subgraph $F$, such that $F\in \mathcal{G}_1^{\ast}$ and either $G[F\cup\{x_3,x_5\}]$ or $G[F\cup \{t_i,t_j\}]$ are $k$-chromatic proper subgraphs of $G$ for some $i,j\in\{1,\ldots,m\}$, contradicting $G$ being $k$-vertex-critical.

So suppose $q=k-1$. If $a=1$, then $G-(A_6\cup A_7)$ is a $k$-vertex-critical graph, a contradiction. So suppose $a>1$. Since $G$ is $k$-vertex-critical, there is a  $k$-colouring $G$ such that $x_5\in A_5$ is the only vertex with colour $k$ and $k-1$ colours appear in the neighbourhood of $x_5$. Without loss of generality, we may assume  colours $a+2,a+3,\ldots, k-1,k$ appear in $A_5$. Since $A_5$ is complete to $A_4$, the $k$-colouring must use $a$ new colours on $A_4$, so we may assume that colours $2,3,\ldots, a,a+1$ appear in $A_4$. It follows that the vertices of $A_1$ are coloured with colours $1,2,\ldots, a$. Since $A_2$ is complete to $A_1$ and $x_5$ is the only vertex coloured $k$, the $(q-a)$-clique $A_2$ is coloured with colours $a+1,a+2,\ldots,k-1$. But now, since $A_3$ is complete to both $A_4$ and $A_2$ and  colour $k$ does not appear in $A_3$, colour $1$ is the only colour that can be used to colour the $a$-clique $A_3$. However, $a>1$, so no such $k$-colouring exists.   This contradicts $G$ being $k$-vertex-critical. 

Therefore, there are no critical graphs in $\calH$.
\end{proof}

\begin{theorem}\label{thm:finitemanycritp5gemfree}
There are only finitely many $k$-vertex-critical $(P_5,\text{ gem})$-free graphs for all $k$.
\end{theorem}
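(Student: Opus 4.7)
The plan is to derive the theorem almost immediately from Lemma \ref{lem:kcritinGastorHast}, the newly-established Theorem \ref{thm:nocritinH}, and a simple clique-bound argument. Once Theorem \ref{thm:nocritinH} is in hand, the remaining structural possibilities for a $k$-vertex-critical graph $G$ are extremely restrictive.

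First, I would invoke Lemma \ref{lem:kcritinGastorHast}, which presents three options for such a $G$: either $G=K_k$, or $G\in\Hstar$, or $G\in\mathcal{G}_i^{\ast}$ for some $i\in\{1,\ldots,10\}$. Since $\Hstar\subseteq\calH$, Theorem \ref{thm:nocritinH} rules out the middle option, leaving only $G=K_k$ (one graph per value of $k$) or $G\in\mathcal{G}_i^{\ast}$ for some $i$.

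To handle the clique-expansion case, I would let $Q_1,\ldots,Q_{n_i}$ denote the cliques of the expansion replacing the vertices of $G_i$ and exploit $\omega(G)\leq\chi(G)=k$. Each $G_i$ is connected and has at most nine vertices (cf.~Figure \ref{fig:G1G7}), so every $v_j\in V(G_i)$ has at least one neighbour $v_l\in V(G_i)$. The sets $Q_j$ and $Q_l$ are completely joined in $G$, so $Q_j\cup Q_l$ is a clique of size $|Q_j|+|Q_l|$, giving
\[
|Q_j|+|Q_l|\leq\omega(G)\leq\chi(G)=k.
\]
Since every $Q_l$ is non-empty, this forces $|Q_j|\leq k-1$ for all $j$, and therefore $|V(G)|=\sum_j|Q_j|\leq 9(k-1)$. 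Thus each $\mathcal{G}_i^{\ast}$ contains only finitely many $k$-vertex-critical graphs, completing the proof.

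The genuine difficulty has already been absorbed into Theorem \ref{thm:nocritinH}: once one knows that no vertex-critical graphs live in $\calH$, the finiteness of the remaining cases is a one-line corollary of the clique bound $\omega(G)\leq k$. This is precisely the sense in which the present approach strengthens the proof of \cite{CaiGoedgebeurHuang2021}: rather than bounding the order of $k$-vertex-critical graphs in $\Hstar$, Theorem \ref{thm:nocritinH} eliminates them entirely, and the remaining families $\mathcal{G}_i^{\ast}$ yield trivially.
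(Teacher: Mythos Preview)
Your proof is correct and follows essentially the same route as the paper: invoke Lemma~\ref{lem:kcritinGastorHast}, eliminate $\Hstar$ via Theorem~\ref{thm:nocritinH}, and then bound the order of any clique expansion in $\mathcal{G}_i^{\ast}$ by bounding each $|Q_j|$. The only cosmetic difference is that the paper observes $|Q_j|\le k-2$ (since otherwise $G$ would properly contain a $k$-clique), whereas you derive the slightly weaker $|Q_j|\le k-1$; either suffices for finiteness.
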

\begin{proof}
Let $G$ be a $k$-vertex-critical $(P_5,\text{ gem})$-free graph. By Lemma~\ref{lem:kcritinGastorHast}, we know that $G=K_k$, $G\in \mathcal{G}_i^{\ast}$, or $G\in\Hstar$. From Theorem~\ref{thm:nocritinH}, we know that $G\not\in\Hstar$. If $G\in\mathcal{G}_i^{\ast}$, then as in~\cite{CaiGoedgebeurHuang2021}, each $Q_j$ can only be a clique of order at most $k-2$ (otherwise, since there are no isolated vertices in any of the $G_i$'s, a $k$-clique will be a proper subgraph of $G$) $|V(G)|\le (k-2)^{|V(G_i|}\le (k-2)^{9}$, and therefore there are only finitely many such graphs.  
\end{proof}

\section{Complete Classification for $k=6$ and $k=7$}\label{sec:6and7}
As outlined in the proof of Theorem~\ref{thm:finitemanycritp5gemfree}, every $k$-vertex-critical ($P_5$, gem)-free graph is $K_k$ or a clique expansion of $G_i$ for some $i\in \{1,4,5,6,7,8,9,10\}$. This allowed us to implement an exhaustive computer search for  all such graphs for $k\le 7$ in Sagemath. Our search yielded exactly $19$ $6$-vertex-critical $(P_5,\text{ gem})$-free graphs and $46$ $7$-vertex-critical $(P_5,\text{ gem})$-free graphs. 
The code used to generate these graphs as well as the graphs in graph6 format are available at~\cite{graphfiles}. The idea behind our generation is quite simple: for each $G_i$ for $i\in\{1,4,5,6,7,8,9,10\}$ we construct all possible nonisomorphic clique expansions with clique number at most $k-1$ and perform a brute-force test to check if each graph is $k$-vertex-critical or not. To generate all nonisomorphic clique expansions with sufficiently low clique number, we first enumerate all possible $|V(G_i)|$-tuples where each element is between $1$ and $k-2$ and these will correspond to the replacing vertex $v_j$ of $G_i$ with a clique whose order is equal to the $j$-th element in the tuple. The tuples ones that would create cliques that are too large or would produce an isomorphic graph to a previous $|V(G_i)|$-tuple are removed from the list. It should be noted that both tuple generation and the vertex-critical test are computationally expensive\footnote{The computation for $k=7$ took approximately 72 hours on a machine with an AMD Ryzen 7 4800H Processor and 16GB of RAM.}. The complete lists are included below. For these lists, let $G_1(n_1,n_2,n_3,n_5)$ ($G_7(n_1,n_2,n_3,n_5,n_6,n_7)$) denote the clique expansion of $G_1$ ($G_7$) with vertex $v_i$ replaced by a clique of order $n_i$.\\

The $19$ $6$-vertex-critical $(P_5,\text{ gem})$-free graphs are given in the following list:\\
\tiny
\begin{multicols}{2}
\begin{enumerate}[nosep]
 \item $K_6$
 \item $G_1(4, 1, 4, 1, 1)$
 \item $G_1(4, 1, 3, 2, 1)$
 \item $G_1(3, 2, 3, 2, 1)$
 \item $G_1(3, 2, 2, 3, 1)$
 \item $G_1(3, 2, 2, 2, 2)$
 \item $G_7(1, 3, 1, 3, 1, 1, 4, 2)$
 \item $G_7(1, 2, 2, 3, 1, 1, 4, 2)$
 \item $G_7(1, 1, 3, 3, 1, 1, 4, 2)$
 \item $G_7(1, 2, 2, 2, 1, 2, 4, 2)$
 \item $G_7(2, 2, 1, 2, 2, 1, 3, 3)$
 \item $G_7(2, 1, 2, 2, 2, 1, 3, 3)$
 \item $G_7(1, 1, 3, 2, 2, 1, 3, 3)$
 \item $G_7(2, 1, 2, 2, 1, 2, 3, 3)$
 \item $G_7(1, 2, 2, 2, 1, 1, 4, 3)$
 \item $G_7(1, 1, 3, 2, 1, 1, 4, 3)$
 \item $G_7(1, 2, 2, 1, 1, 2, 4, 3)$
 \item $G_7(1, 1, 3, 1, 1, 1, 4, 4)$
 \item $G_7(1, 1, 2, 1, 1, 2, 4, 4)$
\end{enumerate}
\end{multicols}

\normalsize
\vspace{5mm}
The $46$ $7$-vertex-critical $(P_5,\text{ gem})$-free graphs are given in the following list:\\
\tiny
\begin{multicols}{3}
\begin{enumerate}[nosep]
 \item $K_7$
 \item $G_1(5, 1, 5, 1, 1)$
 \item $G_1(5, 1, 4, 2, 1)$
 \item $G_1(4, 2, 4, 2, 1)$
 \item $G_1(5, 1, 3, 3, 1)$
 \item $G_1(4, 2, 3, 3, 1)$
 \item $G_1(3, 3, 3, 3, 1)$
 \item $G_1(4, 2, 2, 4, 1)$
 \item $G_1(4, 2, 3, 2, 2)$
 \item $G_1(3, 3, 3, 2, 2)$
 \item $G_1(3, 3, 2, 3, 2)$
 \item $G_7(1, 4, 1, 4, 1, 1, 5, 2)$
 \item $G_7(1, 3, 2, 4, 1, 1, 5, 2)$
 \item $G_7(1, 2, 3, 4, 1, 1, 5, 2)$
 \item $G_7(1, 1, 4, 4, 1, 1, 5, 2)$
 \item $G_7(1, 3, 2, 3, 1, 2, 5, 2)$
 \item $G_7(1, 2, 3, 3, 1, 2, 5, 2)$
 \item $G_7(2, 3, 1, 3, 2, 1, 4, 3)$
 \item $G_7(2, 2, 2, 3, 2, 1, 4, 3)$
 \item $G_7(1, 3, 2, 3, 2, 1, 4, 3)$
 \item $G_7(2, 1, 3, 3, 2, 1, 4, 3)$
 \item $G_7(1, 2, 3, 3, 2, 1, 4, 3)$
 \item $G_7(1, 1, 4, 3, 2, 1, 4, 3)$
 \item $G_7(2, 2, 2, 3, 1, 2, 4, 3)$
 \item $G_7(2, 1, 3, 3, 1, 2, 4, 3)$
 \item $G_7(2, 2, 2, 2, 2, 2, 4, 3)$
 \item $G_7(1, 2, 3, 2, 2, 2, 4, 3)$
 \item $G_7(1, 3, 2, 3, 1, 1, 5, 3)$
 \item $G_7(1, 2, 3, 3, 1, 1, 5, 3)$
 \item $G_7(1, 1, 4, 3, 1, 1, 5, 3)$
 \item $G_7(1, 3, 2, 2, 1, 2, 5, 3)$
 \item $G_7(1, 2, 3, 2, 1, 2, 5, 3)$
 \item $G_7(1, 1, 3, 3, 1, 2, 5, 3)$
 \item $G_7(2, 2, 2, 2, 2, 1, 4, 4)$
 \item $G_7(2, 1, 3, 2, 2, 1, 4, 4)$
 \item $G_7(1, 1, 4, 2, 2, 1, 4, 4)$
 \item $G_7(2, 2, 2, 2, 1, 2, 4, 4)$
 \item $G_7(2, 1, 3, 2, 1, 2, 4, 4)$
 \item $G_7(1, 1, 3, 2, 2, 2, 4, 4)$
 \item $G_7(1, 2, 3, 2, 1, 1, 5, 4)$
 \item $G_7(1, 1, 4, 2, 1, 1, 5, 4)$
 \item $G_7(1, 2, 3, 1, 1, 2, 5, 4)$
 \item $G_7(1, 2, 2, 2, 1, 2, 5, 4)$
 \item $G_7(1, 1, 3, 2, 1, 2, 5, 4)$
 \item $G_7(1, 1, 4, 1, 1, 1, 5, 5)$
 \item $G_7(1, 1, 3, 1, 1, 2, 5, 5)$
\end{enumerate}
\end{multicols}
\normalsize

\section{Conclusion}\label{sec:conclusion}
We provided an alternate, stronger proof that there are only finitely many $k$-vertex-critical $(P_5,\text{ gem})$-free graphs for all $k$. We used our proof to completely classify all such graphs for $k\le 7$, extending the lists given in~\cite{CaiGoedgebeurHuang2021} for $k\le 5$. While our results give the most thorough structural description of vertex-critical $(P_5,\text{ gem}$)-free graphs, there are still a few open problems.

\begin{problem}\label{prob:8andup}
Classify all $k$-vertex-critical $(P_5,\text{ gem})$-free graphs for fixed $k\ge 8$.
\end{problem}

To our knowledge, complete lists of $k$-vertex-critical $(H_1,H_2)$-free graphs for $k\ge 8$ are only known for (gem, co-gem)-free graphs, where they are given for all $k\le 16$~\cite{AbuadasCameronHoangSawada2022}. Therefore, it is of interest to generate these complete lists for other families of graphs. Our methods from Section~\ref{sec:6and7} work in theory to find these classifications, however, limitations on computational resources prevent us from generating these lists for now. One approach to reduce the search space of our methods in Section~\ref{sec:6and7} is to show that $G_i$ has no $k$-vertex-critical clique expansions for some $1\le i\le 10$. In fact, for $k\le 7$, there are no clique expansions of any $G_i$ other than for $i=1$ or $i=7$ and we proved in Lemma~\ref{lem:nocritinG2G3} that there are none for $i=2$ and $i=3$ for any $k$. This leads to the following open problem:

\begin{problem}\label{prob:cliqueexpsofotherGis}
Determine if there are any $k$-vertex-critical clique expansions of $G_i$ for $i\neq 1$ and $i\neq 7$.
\end{problem}

If there are no such graphs for Open Problem~\ref{prob:cliqueexpsofotherGis}, then the $G_1$ clique expansions have already been classified from the recent work on (gem, co-gem)-free graphs~\cite{AbuadasCameronHoangSawada2022}, so to classify all $k$-vertex-critical $(P_5,\text{ gem})$-free graphs would require only searching all clique expansions of $G_7$. 

\section*{Acknowledgements}
This work was supported by the Canadian Tri-Council Research Support Fund. Both authors were each supported by individual NSERC Discovery Grants.

\bibliographystyle{abbrv}
\bibliography{refs}

\end{document}